\newtheorem{theorem}{Theorem}
\newtheorem{lemma}{Lemma}
\newtheorem{proposition}{Proposition}
\newtheorem{condition}{Condition}
\newtheorem{example}{Example}
\newtheorem{remark}{Remark}
\renewcommand{\P}{\mathbb{P}}
\newcommand{\E}{\mathbb{E}}
\newcommand{\R}{\mathbb{R}}
\newcommand{\G}{\mathscr{G}}
\newenvironment{enumerate*}%
\let\dolarger\relsize} 
\def\dolarger#1{\larger[#1]}} 
\newcommand*\@@bigtimes[2]{\vphantom{\prod} 
  \vcenter{\hbox{\dolarger{4}$\m@th#1\mkern-2mu\times\mkern-2mu$}}} 
\newcommand*\bigtimes{\mathop{\mathpalette\@@bigtimes\relax}\displaylimits} 
\def\E{\mathbb{E}}\def\P{\mathbb{P}}\def\R{\mathbb{R}}\def\1{\mathbbm{1}}
\begin{document}

	\title{On some information-theoretic aspects of \\  non-linear statistical inverse problems}
	\author{Richard Nickl~~~\& Gabriel P. Paternain \\ ~~ \\ \textsc{University of Cambridge}}

\maketitle

\begin{abstract} Results by van der Vaart (1991) from semi-parametric statistics about the existence of a non-zero Fisher information are reviewed in an infinite-dimensional non-linear Gaussian regression setting. Information-theoretically optimal inference on aspects of the unknown parameter is possible if and only if the adjoint of the linearisation of the regression map satisfies a certain range condition. It is shown that this range condition may fail in a commonly studied elliptic inverse problem with a divergence form equation, and that a large class of smooth linear functionals of the conductivity parameter cannot be estimated efficiently in this case. In particular, Gaussian `Bernstein von Mises'-type approximations for Bayesian posterior distributions do not hold in this setting.
\end{abstract}

\section{Introduction}

The study of \textit{inverse problems} forms an active scientific field at the interface of the physical, mathematical and statistical sciences and machine learning. A common setting is where one considers a `forward map' $\mathscr G$ between two spaces of functions, and the `inverse problem' is to recover $\theta$ from the `data' $\G_\theta \equiv \mathscr G(\theta)$. In real-world measurement settings, data is observed \textit{discretely}, for instance one is given point evaluations $\mathscr G(\theta)(X_i)$ of the function $\mathscr G(\theta)$ on a finite discretisation $\{X_i\}_{i=1}^N$ of the domain of $\G_\theta$. Each time a measurement is taken, a statistical error is incurred, and the resulting noisy data can then be described by a statistical regression model $Y_i =\G_\theta(X_i) + \varepsilon_i$, with regression functions $\{\mathscr G_\theta: \theta \in \Theta\}$ indexed by the parameter space $\Theta$. Such  models have been studied systematically at least since C.F.Gauss~\cite{G1809} and constitute a core part of statistical science ever since.

In a large class of important applications, the family of regression maps $\{\G_\theta: \theta \in \Theta\}$ arises from physical considerations and is described by a \textit{partial differential equation} (PDE). The functional parameter $\theta$ is then naturally \textit{infinite- (or after discretisation step, high-) dimensional}, and the map $\theta \mapsto \G_\theta$ is often \textit{non-linear}, which poses challenges for statistical inference. Algorithms for such `non-convex' problems have been proposed and developed in the last decade since influential work by A. Stuart \cite{S10}, notably based on ideas from \textit{Bayesian inference}, where the parameter $\theta$ is modelled by a Gaussian process (or related) prior $\Pi$. The inverse problem is `solved' by approximately computing the posterior measure $\Pi(\cdot|(Y_i, X_i)_{i=1}^N)$ on $\Theta$ by an iterative (e.g., MCMC) method. While the success of this approach has become evident empirically, an objective mathematical framework that allows to give rigorous statistical and computational guarantees for such algorithms in non-linear problems has only emerged more recently. The types of results obtained so far include statistical \textit{consistency and contraction rate} results for posterior distributions and their means, see \cite{MNP21, AN19, GN20} and also \cite{NS17, N17, NR20, GR20, K21}, as well as \textit{computational guarantees} for MCMC based sampling schemes \cite{HSV14, NW20, BN21}. 

Perhaps the scientifically most desirable guarantees are those for `statistical uncertainty quantification' methods based on posterior distributions, and these are notoriously difficult to obtain. Following a programme originally developed by \cite{CN13, CN14, CR15, R17} in classical `direct' regression models, one way to address this issue is by virtue of the so-called \textit{Bernstein-von Mises theorems} which establish asymptotically (as $N \to \infty$) exact Gaussian approximations to posterior distributions. These exploit the precise but delicate machinery from semi-parametric statistics and Le Cam theory (see \cite{vdV98}) and aim at showing that the actions $\langle \psi, \theta \rangle|(Y_i, X_i)_{i=1}^N$ of infinite-dimensional posterior distributions on a well chosen set of test functions $\psi$ converge -- after rescaling by $\sqrt N$ (and appropriate re-centering) -- to fixed normal $\mathcal N(0, \sigma_\theta^2(\psi))$-distributions (with high probability under the data $(Y_i, X_i)_{i=1}^N$). The limiting variance $\sigma_\theta^2(\psi)$ has an information-theoretic interpretation as the \textit{Cram\'er-Rao lower bound} (inverse Fisher information) of the model (see also Section \ref{lbdsec}). Very few results of this type are currently available in PDE settings. Recent progress in \cite{MNP20} (see also related work in \cite{N17, MNP17, NR20, GK20}) has revealed that Bernstein-von Mises theorems may hold true if the PDE underlying $\G_\theta$ has certain analytical properties. Specifically one has to solve `information equations' that involve the `information operator' $D\G_\theta^* D\G_\theta$ generated by the linearisation $D\G_\theta$ of $\G_\theta$ (with appropriate adjoint $D\G_\theta^*$). The results in \cite{N17, MNP20} achieve this for a class of PDEs where a base differential operator (such as the Laplacian, or the geodesic vector field) is attenuated by an unknown potential $\theta$, and where $\psi$ can be any smooth test function.

\smallskip

In the present article we study a different class of elliptic PDEs commonly used to model steady state diffusion phenomena, and frequently encountered as a `fruitfly example' of a non-linear inverse problem in applied mathematics (see the many references in \cite{S10, GN20}). While this inverse problem can be solved in a statistically consistent way (with `nonparametric convergence rates' to the ground truth, see \cite{NVW18, GN20}), we show here that, perhaps surprisingly, semi-parametric Bernstein-von Mises phenomena for posterior distributions of a large class of linear functionals of the relevant `conductivity' parameter \textit{do in fact not hold} for this PDE, not even just locally in a `smooth' neighbourhood of the standard Laplacian. See Theorems \ref{nogood} and \ref{nogood2}, which imply in particular that the inverse Fisher information $\sigma_\theta^2(\psi)$ does not exist for a large class of smooth $\psi$'s. The results are deduced from a theorem of van der Vaart \cite{vdV91} in general statistical models, combined with a thorough study of the mapping properties of $D\G_\theta$ and its adjoint for the PDE considered. Our negative results should help to appreciate the mathematical subtlety underpinning exact Gaussian approximations to posterior distributions in non-linear inverse problems arising with PDEs.

\section{Information geometry in non-linear regression models} \label{infog}

In this section we review some by now classical material on information theoretical properties of infinite-dimensional regular statistical models \cite{vdV91, vdV98}, and develop the details for a general vector-valued non-linear regression model relevant in inverse problems settings. Analogous results could be obtained in the idealised Gaussian white noise model (cf.~Chapter 6 in \cite{GN16}) sometimes considered in the inverse problems literature.

\subsection{Measurement setup}

Let $(\mathcal X, \mathcal A, \lambda)$ be a probability space and let $V$ be a finite-dimensional vector space of fixed finite dimension $p_V \in \mathbb N$ with inner product $\langle \cdot, \cdot \rangle_V$ and norm $|\cdot|_V$. We denote by
$L^\infty(\mathcal X)$  and $L^2(\mathcal X) = L^2_\lambda(\mathcal X,V),$ the bounded measurable, and $\lambda$- square integrable, $V$-valued functions defined on $\mathcal X$ normed by $\|\cdot\|_\infty$ and $\|\cdot\|_{L_\lambda^2(\mathcal X)}$, respectively.  The inner product on $L^2(\mathcal X)$ is denoted by $\langle \cdot, \cdot \rangle_{L^2(\mathcal X)}$. We will also require Hilbert spaces $L^2(P)=L^2(V \times \mathcal X, P)$ of real-valued functions defined on $V \times \mathcal X$ that are square integrable with respect to a probability measure $P$ on the produce space $V \times \mathcal X$, with inner product $\langle \cdot, \cdot \rangle_{L^2(P)}$. 

\smallskip

We will consider a parameter space $\Theta$ that is subset of a (separable) Hilbert space $(\mathbb H, \langle \cdot, \cdot \rangle_{\mathbb H})$ on which measurable `forward maps' 
\begin{equation}\label{fwdG}
\theta \mapsto \mathscr G(\theta)= \G_\theta, \qquad \mathscr G : \Theta \to L^2_\lambda(\mathcal X, V),
\end{equation}
are defined. Observations then arise in a general random design regression setup where one is given jointly i.i.d.~random vectors  $(Y_i, X_i)_{i=1}^N$ of the form
\begin{equation} \label{model}
Y_i = \mathscr G_\theta(X_i) + \varepsilon_i, \quad \varepsilon_i \sim^{i.i.d} \mathcal N(0, I_V),  \quad i=1, \dots, N,
\end{equation}
where the $X_i$'s are random i.i.d.~covariates drawn from law $\lambda$ on $\mathcal X$. We assume that the covariance $I_V$ of each Gaussian noise vector $\varepsilon_i \in V$ is diagonal for the inner product of $V$. [Most of the content of this section is not specific to Gaussian errors $\varepsilon_i$ in (\ref{model}), cf.~Example 25.28 in \cite{vdV98} for discussion.] 

\smallskip

We consider a `tangent space' $H$ at any fixed $\theta \in \Theta$ such that $H$ is a linear subspace of $\mathbb H$ and such that perturbations of $\theta$ in directions $h \in H$ satisfy $\{\theta + sh, h \in H, s \in \R, |s|<\epsilon\} \subset \Theta$ for some $\epsilon$ small enough.  We denote by $\bar H$ the closure of $H$ in $\mathbb H$ and  will regard $\bar H$ itself as a Hilbert space with inner product $\langle \cdot, \cdot \rangle_\mathbb H$. We employ the following assumption in the sequel.
\begin{condition}\label{ass0}
Suppose $\G$ is uniformly bounded $\sup_{\theta \in \Theta}\|\G(\theta)\|_\infty \le U_\G.$ Moreover for fixed $\theta \in \Theta, x \in \mathcal X,$ and every $h \in H$ suppose that $\G_\theta(x)$ is Gateaux-differentiable in direction $h$, that is, for all $x \in \mathcal X$,
\begin{equation}\label{limit}
|\mathscr G(\theta + sh)(x) - \mathscr G(\theta)(x) - s\mathbb I_{\theta}[h](x)|_V=o(s)~~\text{as } s \to 0,
\end{equation}
for some continuous linear operator $\mathbb I_{\theta}: (H, \|\cdot\|_\mathbb H) \to L^2_\lambda(\mathcal X,V)$, and that for some $\epsilon>0$ small enough and some finite constant $B=B(h, \theta)$,
\begin{equation}\label{domin}
\sup_{|s|<\epsilon}\frac{\|\G(\theta+sh) - \G(\theta)\|_\infty}{|s|} \le B.
\end{equation}
\end{condition}

\subsection{The DQM property}

We will now derive the semi-parametric `score' and `information' operators (cf.~\cite{vdV91, vdV98}) in the observational model (\ref{model}). If $P_\theta$ is the law of $(Y_1, X_1)= (\mathscr G(\theta)(X_1)+\varepsilon_1, X_1)$ on $V \times \mathcal X$ then (\ref{model}) is an i.i.d.~statistical model of product laws 
\begin{equation} \label{dmodel}
\mathscr P_N = \{P_\theta^N = \otimes_{i=1}^N P_\theta: \theta \in \Theta\},~~ N \in \mathbb N,
\end{equation}
on $(V \times \mathcal X)^N$, and we can identify all information theoretic properties in terms of the model $\mathscr P = \mathscr P_1= \{P_\theta: \theta \in \Theta\}$ for the coordinate distributions. The model $\mathscr P$ is \textit{differentiable in quadratic mean (DQM)} at $\theta \in \Theta$ along the tangent space $H$ with score operator 
\begin{equation}\label{scoop}
\mathbb A_\theta : H \to L^2(V \times \mathcal X, P_\theta)
\end{equation}
(cf.~(3.2) in \cite{vdV91}) if
 for each path $\theta_{s,h}= \theta+ sh, h \in H,$ we have as $s \to 0$,
\begin{equation}\label{DQM}
\int_{V \times \mathcal X} \Big[\frac{1}{s}\big(dP_{\theta_{s,h}}^{1/2} - dP_\theta^{1/2}\big) - \frac{1}{2} \mathbb A_\theta [h] ~dP_\theta^{1/2} \Big]^2 \to 0
\end{equation}
where $$dP^{1/2}_\theta(y,x) = (2\pi)^{-p_V/4} e^{-|y-\G(\theta)(x)|_V^2/4}dydx,~~ y \in V, x \in \mathcal X,$$ are the square root probability densities of $P_\theta$ with respect to Lebesgue measure on $V \times \mathcal X$.

\begin{theorem}\label{dqmprop}
Assuming Condition \ref{ass0}, the model (\ref{dmodel}) is differentiable in quadratic mean (DQM) at $\theta \in \Theta$ along every path $(\theta + sh: |s|<\epsilon, h \in H)$ with $\epsilon$ small enough. The `score' operator $\mathbb A_\theta:  H \to L^2(V \times \mathcal X, P_\theta)$ is given by
\begin{equation} \label{scop}
\mathbb A_\theta[h](y,x) = \langle y-\mathscr G(\theta)(x),  \mathbb I_{\theta}(h)(x) \rangle_V, ~~ h \in H, ~~(y,x) \in V \times \mathcal X,
\end{equation}
which extends to a continuous linear operator $\mathbb A_\theta : \bar H \to L^2(P_\theta)$.
\end{theorem}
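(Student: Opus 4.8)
The plan is to verify the DQM condition (\ref{DQM}) directly by establishing pointwise (in $x$) differentiability of the square-root Gaussian densities and then upgrading to the $L^2(V\times\mathcal X, P_\theta)$ convergence via a domination argument, exactly in the spirit of the classical argument for smooth parametric families (cf.\ Lemma 7.6 and Example 25.28 in \cite{vdV98}). First I would fix $\theta\in\Theta$, $h\in H$, and work with the explicit square-root density $dP^{1/2}_\theta(y,x)=(2\pi)^{-p_V/4}e^{-|y-\G(\theta)(x)|_V^2/4}\,dy\,dx$. Writing $g(\theta,x)=\G(\theta)(x)$, the map $s\mapsto dP^{1/2}_{\theta_{s,h}}(y,x)$ is, for each fixed $(y,x)$, a smooth function of $s$ through the finite-dimensional chain $s\mapsto g(\theta+sh,x)\mapsto e^{-|y-g|_V^2/4}$. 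Its derivative at $s=0$ is $\tfrac12\langle y-\G(\theta)(x),\mathbb I_\theta(h)(x)\rangle_V\,dP^{1/2}_\theta(y,x)$, using the Gateaux differentiability (\ref{limit}) of $\G_\theta(x)$ with derivative $\mathbb I_\theta[h](x)$ and the elementary derivative of the Gaussian kernel; this already identifies the candidate score $\mathbb A_\theta[h]$ as in (\ref{scop}).

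Next I would establish the $L^2$ convergence in (\ref{DQM}). The integrand, after dividing by $s$, converges to $0$ pointwise a.e.\ in $(y,x)$ by the pointwise differentiability just established. To pass to the limit under the integral I would dominate the difference quotient: the mean value theorem in $s$ gives
\begin{equation*}
\Big|\frac{1}{s}\big(dP^{1/2}_{\theta_{s,h}}(y,x)-dP^{1/2}_\theta(y,x)\big)\Big|
\le \sup_{|\sigma|<\epsilon}\tfrac12\big|y-g(\theta+\sigma h,x)\big|_V\,
\Big|\frac{g(\theta+\sigma h,x)-g(\theta,x)}{\sigma}\Big|\;
(2\pi)^{-p_V/4}e^{-|y-g(\theta+\sigma h,x)|_V^2/4},
\end{equation*}
and here I would use the uniform bound $\|\G(\theta)\|_\infty\le U_\G$ from Condition \ref{ass0} to control $|y-g(\theta+\sigma h,x)|_V$ by $|y|_V+U_\G$ and shift the Gaussian centre so that $e^{-|y-g(\theta+\sigma h,x)|_V^2/4}\le C e^{-|y|_V^2/8}$ for $|y|_V$ large; the difference quotient of $g$ is bounded by $B(h,\theta)$ via (\ref{domin}). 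This produces an $L^2(dy\,dx)$-dominating function of the form $C(1+|y|_V)e^{-|y|_V^2/8}$ (times a $\lambda$-probability density in $x$), so dominated convergence applies and (\ref{DQM}) follows. The same domination, now evaluated at $\sigma=0$, shows $\mathbb A_\theta[h]\in L^2(P_\theta)$ with $\|\mathbb A_\theta[h]\|_{L^2(P_\theta)}^2=\mathbb E_\theta|\langle\varepsilon_1,\mathbb I_\theta(h)(X_1)\rangle_V|^2=\|\mathbb I_\theta(h)\|_{L^2_\lambda(\mathcal X,V)}^2$ (using that $\mathrm{Cov}(\varepsilon_1)=I_V$ is the identity for the inner product of $V$), so that in fact $\|\mathbb A_\theta[h]\|_{L^2(P_\theta)}=\|\mathbb I_\theta(h)\|_{L^2(\mathcal X)}$.

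The final claim, that $\mathbb A_\theta$ extends to a continuous linear operator on $\bar H$, then follows from the isometry-type identity just derived together with continuity of $\mathbb I_\theta\colon(H,\|\cdot\|_{\mathbb H})\to L^2_\lambda(\mathcal X,V)$ assumed in Condition \ref{ass0}: linearity in $h$ is immediate from (\ref{scop}) and linearity of $\mathbb I_\theta$, and $\|\mathbb A_\theta[h]\|_{L^2(P_\theta)}=\|\mathbb I_\theta(h)\|_{L^2(\mathcal X)}\le \|\mathbb I_\theta\|\,\|h\|_{\mathbb H}$ gives the bounded-extension to the closure $\bar H$ by the BLT theorem. I expect the main obstacle to be purely technical rather than conceptual: getting a genuinely integrable Gaussian-type dominating function uniformly over the path parameter $\sigma\in(-\epsilon,\epsilon)$, which is where the global sup-norm bound $U_\G$ and the local Lipschitz bound (\ref{domin}) in Condition \ref{ass0} are essential (without them the moving Gaussian centre $g(\theta+\sigma h,x)$ could escape to infinity and destroy the domination). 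The finite-dimensionality $p_V<\infty$ of $V$ is used so that these Gaussian integrals in $y$ converge; everything else is bookkeeping with the chain rule.
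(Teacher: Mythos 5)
Your proposal is correct and follows essentially the same route as the paper's proof: identify the pointwise derivative of the square-root densities via the chain rule and (\ref{limit}), dominate the difference quotients using $U_\G$ and (\ref{domin}), conclude (\ref{DQM}) by dominated convergence, and extend $\mathbb A_\theta$ to $\bar H$ via $\|\mathbb A_\theta h\|_{L^2(P_\theta)} = \|\mathbb I_\theta h\|_{L^2_\lambda} \lesssim \|h\|_{\mathbb H}$ (the paper organises the domination around the likelihood-ratio exponent $[e^{f(s)}-1]/s$ under $P_\theta$ rather than differences of square-root densities under Lebesgue measure, but this is bookkeeping). One caveat: Condition \ref{ass0} only gives differentiability of $s\mapsto \G(\theta+sh)(x)$ at $s=0$ together with the difference-quotient bound (\ref{domin}), so your assertions that this map is \emph{smooth} in $s$ and your ``mean value theorem in $s$'' are not literally licensed; the displayed dominating bound is nonetheless valid if you instead apply the mean value theorem to $v\mapsto e^{-|y-v|_V^2/4}$ along the segment from $\G(\theta)(x)$ to $\G(\theta+sh)(x)$ (all of whose points are bounded by $U_\G$) and then invoke (\ref{domin}), so the argument is easily repaired and the conclusion stands.
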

\begin{proof}
Fix $h \in H$. Using that the densities $dP_\theta$ are strictly positive the l.h.s.~in (\ref{DQM}) equals 
\begin{align*}
&\int_{V \times \mathcal X} \Big[\frac{1}{s}\Big(\frac{dP_{\theta_{s,h}}^{1/2}}{dP_\theta^{1/2}} - 1\Big) - \frac{1}{2} \mathbb A_\theta [h] \Big]^2 dP_\theta \\
& = \int_{V \times \mathcal X} \left[ \frac{1}{s}\Big[e^{\langle \frac{y}{2}, \G(\theta_{s,h})(x) - \G(\theta)(x)\rangle_V - \frac{|\G(\theta_{s,h})(x)|_V^2- |\G(\theta)(x)|_V^2}{4}} - 1 \Big] - \frac{1}{2} \mathbb A_\theta [h] \right]^2 dP_\theta(y,x) \\
& = \int_{V \times \mathcal X} \left[\frac{1}{s} \left[e^{f(s)} - 1 - \frac{s}{2} \mathbb A_\theta [h] \right] \right]^2 dP_\theta
\end{align*}
where, for $y,x$ fixed, $$f(s) = \langle \frac{y}{2}, \G(\theta_{s,h})(x) - \G(\theta)(x)\rangle_V - \frac{|\G(\theta_{s,h})(x)|_V^2- |\G(\theta)(x)|_V^2}{4}.$$ Clearly $f(0)=0$ and by Condition \ref{ass0} and the chain rule we have $$f'(0)= \langle \frac{y}{2}, \mathbb I_\theta[h](x)\rangle_V - \frac{\langle \G(\theta)(x), \mathbb I_\theta[h](x)\rangle_V}{2} = \frac{1}{2} \mathbb A_\theta[h](y,x)$$ so that the last integrand converges to zero for every $(y,x) \in V \times \mathcal X$, as $ s \to 0$. By Condition \ref{ass0} and the Cauchy-Schwarz inequality we see that $[e^{f(s)} - 1]/s$ is bounded by a constant multiple of $e^{C|y|_V}, C=C(B, U_\G)<\infty,$ uniformly in $|s|<\epsilon$. Furthermore, again from Condition \ref{ass0}, $$\|\mathbb A_\theta[h]\|_{L^2(P_\theta)} \lesssim [E|Y|_V + U_\G] \|\mathbb I_\theta[h]\|_{L^2_\lambda} \lesssim \|h\|_{\mathbb H}, \text{ and }~E_\theta [e^{C|Y|_V}+  |\mathbb A_\theta [h](Y,X)|]^2 <\infty,$$ so the last limit can be $P_\theta$-integrated by the dominated convergence theorem to give that the last displayed integral converges to zero, verifying the DQM property. The first inequality in the last display also implies that $\mathbb A_\theta$ extends to a continuous linear map from $\bar H$ to $L^2(P_\theta)$.
\end{proof}

\subsection{The adjoint score and information operator}

The bounded linear operator $\mathbb A_\theta : (\bar H, \langle \cdot, \cdot \rangle_{\mathbb H}) \to L^2(V \times \mathcal X, P_\theta)$ has adjoint operator $$\mathbb A_\theta^*:L^2(V \times \mathcal X, P_\theta) \to (\bar H, \langle \cdot, \cdot \rangle_\mathbb H)$$ which satisfies
$$\langle w, \mathbb A_\theta h \rangle_{L^2(P_\theta)} = \langle \mathbb A_\theta^* w, h \rangle_\mathbb H,~\text{ for all}~ w \in L^2(V \times \mathcal X, P_\theta), ~ h \in \bar H.$$ The \textit{information operator} is then defined as 
\begin{equation}\label{infoop}
\mathbb A_\theta^* \mathbb A_\theta:  \bar H \to \bar H.
\end{equation}
Note that the `complexity' of the statistical model enters via the choice of `tangent space' $H$ for which the adjoint is computed, but we suppress this  in the notation.

\smallskip

In the present model the information operator can be entirely described in terms of the operator $\mathbb I_\theta: (H, \langle \cdot, \cdot \rangle_{\mathbb H}) \to L^2_\lambda(\mathcal X, V)$ from Condition \ref{ass0}, and its adjoint $$\mathbb I_\theta^*: L^2_\lambda(\mathcal X, V) \to (\bar H, \langle \cdot, \cdot \rangle_{\mathbb H}).$$

\begin{proposition}
Assuming Condition \ref{ass0} we have 
\begin{equation}\label{infoopid}
\mathbb A_\theta^* \mathbb A_\theta [h] = \mathbb I_\theta^* \mathbb I_\theta[h],~~ \forall h \in H.
\end{equation}
\end{proposition}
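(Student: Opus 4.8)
The plan is to compute the adjoint $\mathbb{A}_\theta^*$ explicitly using the product structure of $P_\theta$ and the fact that, under $P_\theta$, the vector $Y - \mathscr{G}(\theta)(X)$ is a standard $V$-valued Gaussian conditionally on $X$. First I would fix $w \in L^2(V \times \mathcal{X}, P_\theta)$ and $h \in H$ and write out the defining relation $\langle w, \mathbb{A}_\theta h\rangle_{L^2(P_\theta)} = \langle \mathbb{A}_\theta^* w, h\rangle_{\mathbb{H}}$ with $\mathbb{A}_\theta[h](y,x) = \langle y - \mathscr{G}(\theta)(x), \mathbb{I}_\theta(h)(x)\rangle_V$ from \eqref{scop}. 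The key computational step is to integrate out $y$: since $\int_V (y - \mathscr{G}(\theta)(x))\, dP_\theta(y\,|\,x) = 0$ and the coordinates of $\varepsilon_1$ are uncorrelated with unit variance, one gets
\begin{equation*}
\int_{V \times \mathcal{X}} w(y,x)\, \langle y - \mathscr{G}(\theta)(x), \mathbb{I}_\theta(h)(x)\rangle_V\, dP_\theta(y,x) = \int_{\mathcal{X}} \langle \bar w(x), \mathbb{I}_\theta(h)(x)\rangle_V\, d\lambda(x),
\end{equation*}
where $\bar w(x) := \int_V (y - \mathscr{G}(\theta)(x))\, w(y,x)\, dP_\theta(y\,|\,x) \in V$ is the conditional first moment of $w$ against the centred noise; this belongs to $L^2_\lambda(\mathcal{X}, V)$ by Cauchy--Schwarz. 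Hence $\mathbb{A}_\theta^* w = \mathbb{I}_\theta^*(\bar w)$, i.e. $\mathbb{A}_\theta^* = \mathbb{I}_\theta^* \circ T$ where $T : L^2(P_\theta) \to L^2_\lambda(\mathcal{X},V)$ is the conditional-moment map $w \mapsto \bar w$.

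Next I would identify the composition $T \circ \mathbb{A}_\theta$. Applying $T$ to $\mathbb{A}_\theta[h]$ means computing $\int_V (y - \mathscr{G}(\theta)(x)) \langle y - \mathscr{G}(\theta)(x), \mathbb{I}_\theta(h)(x)\rangle_V\, dP_\theta(y\,|\,x)$, which is exactly the covariance matrix $I_V$ of the noise applied to the vector $\mathbb{I}_\theta(h)(x)$, hence equals $\mathbb{I}_\theta(h)(x)$ itself. Therefore $T \circ \mathbb{A}_\theta = \mathbb{I}_\theta$ as maps $H \to L^2_\lambda(\mathcal{X},V)$, and consequently
\begin{equation*}
\mathbb{A}_\theta^* \mathbb{A}_\theta[h] = \mathbb{I}_\theta^* (T (\mathbb{A}_\theta[h])) = \mathbb{I}_\theta^* \mathbb{I}_\theta[h], \qquad h \in H,
\end{equation*}
which is \eqref{infoopid}. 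The diagonality assumption on $I_V$ stated after \eqref{model} is precisely what makes the Gaussian second-moment computation collapse to the identity; without it one would pick up the noise covariance as a weighting.

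The steps are all elementary once the conditional structure is exploited, so I do not expect a serious obstacle; the one point deserving care is the measurability and $L^2_\lambda$-membership of $\bar w = Tw$, so that $\mathbb{I}_\theta^*$ may legitimately be applied to it — this follows from Jensen/Cauchy--Schwarz together with the finiteness of the Gaussian moments $E_\theta|Y|_V^2 < \infty$ already used in the proof of Theorem \ref{dqmprop}, and from Fubini to justify interchanging the $y$- and $x$-integrations. A secondary bookkeeping point is that \eqref{infoopid} is asserted only for $h \in H$ (not the closure $\bar H$), which is all that is needed since $\mathbb{I}_\theta$ is defined on $H$; the identity then extends to $\bar H$ by continuity of both sides if desired, but that is not required for the statement.
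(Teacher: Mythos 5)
Your proposal is correct and follows essentially the same route as the paper: your conditional-moment map $T$ is exactly the paper's projection operator $\mathscr E_\theta$ from (\ref{projreg}), obtained by the same Fubini computation, and your identification $T \circ \mathbb A_\theta = \mathbb I_\theta$ via the Gaussian second-moment calculation with covariance $I_V$ is the paper's step $\mathscr E_\theta[\mathbb A_\theta[h]](x) = E[\varepsilon \langle \varepsilon, \mathbb I_\theta h(x)\rangle_V] = \mathbb I_\theta h(x)$. No substantive differences or gaps.
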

\begin{proof}
Writing $\phi$ for the pdf of a $\mathcal N(0,I_V)$ distribution we have from Fubini's theorem, for any $w \in L^2(P_\theta)$,\begin{align*}
\langle \mathbb A_\theta h, w \rangle_{L^2(P_\theta)} & = \int_V \int_\mathcal X \big\langle y-\G_\theta(x), \mathbb I_\theta(h)(x) \big\rangle_V w(y,x) dP_\theta(y,x) \\
&= \int_\mathcal X \Big\langle \mathbb I_\theta(h)(x), \int_V (y-\G_\theta(x)) w(y,x) \phi(y-\G_\theta(x)) dy \Big\rangle_Vd \lambda(x) \\
& = \big\langle \mathbb I_\theta(h), E_\theta[(Y-\G_\theta(X))w(Y,X)|X=\cdot] \big\rangle_{L^2_\lambda} \\
&= \big\langle h, \mathbb I_\theta^*\big[E_\theta[(Y-\G_\theta(X))w(Y,X)|X=\cdot]\big] \big\rangle_{\mathbb H},
\end{align*}
that is, the adjoint $\mathbb A_\theta^* = \mathbb I_\theta^* \circ \mathscr E_\theta$ is the composition of the adjoint $\mathbb I_\theta^*$ of $\mathbb I_\theta$ with the conditional expectation (projection) operator 
\begin{equation}\label{projreg}
\mathscr E_\theta: L^2(P_\theta) \to L^2_\lambda(\mathcal X, V),~~\mathscr E_\theta [w](x) = E_\theta[(Y-\G_\theta(X))w(Y,X)|X=x],~x \in \mathcal X.
\end{equation}
Now for $h \in H$  we see for $\varepsilon \sim \mathcal N(0,I_V)$ and $\lambda$-a.e.~$x \in \mathcal X$,
\begin{align*}
\mathscr E_\theta\big[\mathbb A_\theta[h] \big](x) &= E_\theta\big[(Y-\G_\theta (X))\langle Y-\G_\theta(X), \mathbb I_\theta h(X)\rangle_V |X=x\big] \\
&=  E[\varepsilon \langle \varepsilon, \mathbb I_\theta h(x) \rangle_V] = \mathbb I_\theta h(x).
\end{align*}
and therefore $\mathbb A_\theta^* \mathbb A_\theta [h]=\mathbb I_\theta^* \mathscr E_\theta[\mathbb A_\theta[h]] = \mathbb I_\theta^* \mathbb I_\theta[h],$ completing the proof.
\end{proof}

One can think of $\mathscr E_\theta$ in the previous proof as a projection onto the `space of residuals' of the regression equation (\ref{model}), which vanishes in the representation of the information operator (\ref{infoopid}). In particular the model (\ref{model}) is LAN (locally asymptotically normal) for LAN-norm $\|\cdot\|_{LAN}$ arising from LAN inner product 
\begin{equation}\label{lanip}
\langle h_1, h_2 \rangle_{LAN} := \langle \mathbb I_\theta h_1, \mathbb I_\theta h_2 \rangle_{L^2_\lambda} = \langle \mathbb A_\theta h_1, \mathbb A_\theta h_2 \rangle_{L^2(P_\theta)},~~ h_1, h_2 \in \bar H.
\end{equation}
\begin{proposition}\label{LAN}
Let $D_N \equiv (Y_i, X_i)_{i=1}^N \sim P_\theta^N$ arise from model (\ref{model}) for some $\theta \in \Theta$ and suppose Condition \ref{ass0} holds. Then the likelihood ratio process satisfies
$$\log \frac{dP^N_{\theta+h/\sqrt N}}{dP_\theta}(D_N)  \to_{N \to \infty}^d \mathcal N\Big(-\frac{1}{2}\|h\|^2_{LAN}, \|h\|_{LAN}^2\Big),~~h \in H.$$
\end{proposition}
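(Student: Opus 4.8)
The plan is to recognise Proposition \ref{LAN} as an instance of the classical one-parameter local asymptotic normality expansion of Le Cam, applied to the submodel $s \mapsto P_{\theta + sh}$ for fixed $h \in H$. First, the path is well defined: by the tangent space assumption $\{\theta + sh : |s| < \epsilon\} \subset \Theta$, so $\theta + h/\sqrt N \in \Theta$ once $N \ge 1/\epsilon^2$, and the likelihood ratio $dP^N_{\theta+h/\sqrt N}/dP_\theta$ makes sense for all large $N$. By Theorem \ref{dqmprop} the model is DQM at $\theta$ along $(\theta + sh)$, which is precisely the statement that the one-parameter family $s \mapsto dP^{1/2}_{\theta + sh}$ is differentiable in quadratic mean at $s = 0$ in the sense of (\ref{DQM}), with score function $\mathbb A_\theta[h] \in L^2(P_\theta)$ given by (\ref{scop}).

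Next I would record the two elementary properties of the score that feed the expansion. It is centred: writing $Y - \G_\theta(X) = \varepsilon \sim \mathcal N(0, I_V)$ independent of $X \sim \lambda$ under $P_\theta$, one has $E_\theta \mathbb A_\theta[h](Y,X) = E_\theta \langle \varepsilon, \mathbb I_\theta h(X) \rangle_V = 0$. And it is square-integrable with the right variance, namely
$$\|\mathbb A_\theta[h]\|_{L^2(P_\theta)}^2 = \|h\|_{LAN}^2 = \|\mathbb I_\theta h\|_{L^2_\lambda}^2 < \infty$$
by (\ref{lanip}), finiteness coming from the continuity of $\mathbb I_\theta : (H, \|\cdot\|_{\mathbb H}) \to L^2_\lambda(\mathcal X, V)$ in Condition \ref{ass0}. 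Then the standard expansion for i.i.d.\ products of a DQM family (e.g.\ Theorem 7.2 in \cite{vdV98}) gives
$$\log \frac{dP^N_{\theta + h/\sqrt N}}{dP_\theta}(D_N) = \frac{1}{\sqrt N}\sum_{i=1}^N \mathbb A_\theta[h](Y_i, X_i) - \frac{1}{2}\|h\|_{LAN}^2 + o_{P_\theta}(1).$$
Since the summands $\mathbb A_\theta[h](Y_i, X_i)$ are i.i.d., centred, with finite variance $\|h\|_{LAN}^2$, the ordinary central limit theorem yields $N^{-1/2}\sum_{i=1}^N \mathbb A_\theta[h](Y_i, X_i) \to^d \mathcal N(0, \|h\|_{LAN}^2)$, and an application of Slutsky's lemma produces the claimed weak limit $\mathcal N(-\tfrac{1}{2}\|h\|_{LAN}^2, \|h\|_{LAN}^2)$.

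I do not expect any genuine obstacle here: all the analytic substance has already been spent in verifying the DQM property (Theorem \ref{dqmprop}) and in identifying the information operator (\ref{infoopid})--(\ref{lanip}), so what remains is a black-box citation together with the CLT. The only point requiring a line of care is that the hypotheses of the quoted LAN theorem are met along the shrinking perturbations $h/\sqrt N$, but this is immediate from the DQM property of Theorem \ref{dqmprop} combined with the centring and square-integrability of the score noted above. Alternatively one could reprove the expansion directly from (\ref{DQM}), via a second-order Taylor argument for $\log$ together with a Lindeberg-type CLT for triangular arrays along the lines of \cite{vdV98}, but invoking the theorem is cleaner.
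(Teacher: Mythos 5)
Your proposal is correct and follows essentially the same route as the paper, which also reduces the claim to the DQM property of Theorem \ref{dqmprop} and then invokes the standard likelihood-ratio expansion from van der Vaart (the paper cites Lemma 25.14 of \cite{vdV98}, you cite the parametric version, Theorem 7.2, applied to the one-dimensional submodel $s \mapsto P_{\theta+sh}$ --- these are interchangeable here) together with the central limit theorem. The centring and variance identification $\|\mathbb A_\theta[h]\|_{L^2(P_\theta)}^2 = \|h\|_{LAN}^2$ that you spell out is exactly the content of (\ref{lanip}), so nothing is missing.
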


The proof follows from Theorem \ref{dqmprop} in conjunction with Lemma 25.14 in \cite{vdV98} (and the central limit theorem). This in particular justifies the use of the terminology `information operator' for $\mathbb I_\theta^* \mathbb I_\theta$ instead of $\mathbb A_\theta^* \mathbb A_\theta$.

\medskip

In what is to follow, the range of the adjoint score operator $\mathbb A_\theta^*$ will play a crucial role, and we wish to record a few preparatory remarks here. By what precedes that range  equals
\begin{equation} \label{range00}
R(\mathbb A_\theta^*)=\big\{\psi =  \mathbb I_\theta^* \mathscr E_\theta w,~\text{ for some } w \in L^2(P_\theta)\big\}.
\end{equation}
where $\mathscr E_\theta$ is from (\ref{projreg}). Since $\mathscr E_\theta$ maps $L^2(P_\theta)$ into $L_\lambda^2$, a fortiori any $\psi \in R(\mathbb A_\theta^*)$ has to satisfy
\begin{equation} \label{range01}
\psi \in R(\mathbb I_\theta^*)=\{\psi = \mathbb I_\theta^* h,~\text{ for some } h \in L_\lambda^2(\mathcal X, V)\},
\end{equation}
so $R(\mathbb A_\theta^*) \subset R(\mathbb I_\theta^*)$. Likewise, taking $w(y,x)= \langle y-\G(\theta)(x), h(x) \rangle_V \in L^2(P_\theta)$ we can realise (arguing as in the proof of the last proposition) any $h \in L_\lambda^2(\mathcal X)$ as a $\mathscr E_\theta w=h$ and so if $\psi \in R(\mathbb I_\theta^*)$ then $\psi \in R(\mathbb A_\theta^*)$, too. We conclude that 
\begin{equation}\label{range0}
R(\mathbb I_\theta^*) = R(\mathbb A_\theta^*).
\end{equation}

\subsection{Lower bounds for estimation of functionals}\label{lbdsec}

Suppose the problem is to estimate a \textit{linear} functional $\Psi : \Theta \to \R$ of the unknown parameter $\theta$. Let $$\mathscr P_H:=\{w= \mathbb A_\theta(h): h \in H\} \subset L^2(V \times \mathcal X, P_\theta)$$ denote the tangent space of the model $\mathscr P$ induced by $H$. Suppose further we can find $\tilde \psi_\theta \in L^2(P_\theta)$ (the `efficient influence function') s.t.
\begin{equation}\label{riesz}
\Psi(h)= \langle  \tilde \psi_\theta, \mathbb A_\theta h \rangle_{L^2(P_\theta)},~~~ h \in H.
\end{equation}
If such $\tilde \psi_\theta$ exists we can always take it to belong to the closure $\overline{\mathscr P_H}$ of $\mathscr P_H$ in $L^2(P_\theta)$ (simply by $L^2(P_\theta)$-projection onto $\overline{\mathscr P_H}$, if necessary).  A lower bound for the optimal efficient asymptotic variance for $\sqrt N$-consistent estimators of $\Psi(\theta)$ over the model $\{\theta + h/\sqrt N, h \in H\}$ is then given by
\begin{equation}\label{invfi}
\sup_{0\neq w  \in \mathscr P_H} \frac{\langle \tilde \psi_\theta, w \rangle_{L^2(P_\theta)}^2}{\langle w , w \rangle_{L^2(P_\theta)}} = \|\tilde \psi_\theta\|_{L^2(P_\theta)}^2,
\end{equation}
with equality holding in view of $\tilde \psi_\theta \in \overline{\mathscr P_H}$ and the Cauchy-Schwarz inequality. Specifically by Theorem 25.21 in \cite{vdV98} one has
\begin{equation}\label{locmin}
\liminf_{N\to \infty} \inf_{\tilde \psi_N: (V \times \mathcal X)^N \to \R} \sup_{h \in H, \|h\|_{\mathbb H} \le 1/\sqrt N} N E_{\theta+h}^N(\tilde \psi_N - \Psi(\theta+h))^2 \ge \|\tilde \psi_\theta\|_{L^2(P_\theta)}^2.
\end{equation}
If the functional is of the form $\Psi(h) = \langle \psi, h \rangle_{\mathbb H}$ for some fixed test function $\psi$, and if $\mathbb A_\theta^*$ is the adjoint of $\mathbb A_\theta$ from the previous subsection, the requirement (\ref{riesz}) can be written as 
\begin{equation}\label{rieszh}
\langle \psi, h \rangle_{\mathbb H}=\langle  \tilde \psi_\theta, \mathbb A_\theta h \rangle_{L^2(P_\theta)} = \langle \mathbb A_\theta^* \tilde \psi_\theta, h \rangle_{\mathbb H},~~~ h \in H,
\end{equation}
and hence reduces to $\psi = \mathbb A_\theta^* \tilde \psi_\theta$ for some $\tilde \psi_\theta \in  L^2(P_\theta)$, that is, $\psi \in R(\mathbb A_\theta^*)$ from (\ref{range00}).

\subsection{Non-existence of $\sqrt N$-consistent estimators of linear functionals}

 Arguing along the traditional lines of the proof of the Cramer-Rao inequality, the inverse of
\begin{equation}
i_{\theta, h, \psi} := \frac{\|\mathbb A_\theta h \|^2_{L^2(P_\theta)}}{\langle  \psi,  h \rangle_{\mathbb H}^2}
\end{equation}
provides an apriori lower bound for the variance of any estimator $\hat \Psi$ of $\Psi(\theta)=\langle \psi, \theta \rangle_{\mathbb H}$ that is unbiased (i.e., satisfies $E_\theta \hat \Psi = \Psi(\theta)$) for all $\theta$ in the one-dimensional model $\{\theta + sh: |s|<\epsilon\}$. The \textit{efficient} Fisher information for estimating $\Psi$ optimally for all elements $h \in H$ of the tangent space is then given by 
\begin{equation}\label{efffish}
i_{\theta, H, \psi} := \inf_{h \in H, \langle  \psi,  h \rangle_{\mathbb H} \neq 0} \frac{\|\mathbb A_\theta h \|^2_{L^2(P_\theta)}}{\langle  \psi,  h \rangle_{\mathbb H}^2}.
\end{equation}
  Note that when $\psi=\mathbb A_\theta^* \tilde \psi_\theta$ is in the range of $\mathbb A_\theta^*$ then we can rewrite the last number as
\begin{equation}\label{fishy}
\inf_{h \in H, \langle \psi, h \rangle_{\mathbb H} \neq 0} \frac{\|\mathbb A_\theta h \|^2_{L^2(P_\theta)}}{\langle  \mathbb A_\theta^* \tilde \psi_\theta,  h \rangle_{\mathbb H}^2}=\inf_{h \in H, \langle \psi, h \rangle_{\mathbb H} \neq 0} \frac{\|\mathbb A_\theta h \|^2_{L^2(P_\theta)}}{\langle  \tilde \psi_\theta,  \mathbb A_\theta h \rangle_{L^2(P_\theta)}^2}.
\end{equation}
Since $\psi \in R(\mathbb A_\theta^*)$ is orthogonal on $\rm{ker}(\mathbb A_\theta)$, using also (\ref{invfi}) we thus arrive at
\begin{equation}\label{lbd}
 \|\tilde \psi_\theta\|_{L^2(P_\theta)}^2 = \sup_{h  \in H, \mathbb A_\theta h \neq 0} \frac{\langle \tilde \psi_\theta, \mathbb A_\theta h \rangle_{L^2(P_\theta)}^2}{\langle \mathbb A_\theta h  , \mathbb A_\theta h  \rangle_{L^2(P_\theta)}} = i^{-1}_{\theta, H, \psi},
\end{equation}
explaining the relationship to the best asymptotic variance in (\ref{locmin}). 
 
 An important observation of van der Vaart (Theorem 4.1 in \cite{vdV91}) is that a necessary and sufficient condition for the Fisher information for estimating $\Psi(\theta)=\langle \theta, \psi \rangle_\mathbb H$ to be non-zero is that $\psi$ indeed lies in the range of $\mathbb A_\theta^*$.  

\begin{theorem}\label{aad}  For $\theta \in \Theta$ and tangent space $H$, let $i_{\theta,H, \psi}$ be the efficient Fisher information (\ref{efffish}) for estimating the functional $\Psi(\theta)=\langle \theta, \psi \rangle_{\mathbb H}, \psi \in \bar H$. Then $i_{\theta, H, \psi}>0$ if and only if $\psi \in R(\mathbb I_\theta^*)$.
\end{theorem}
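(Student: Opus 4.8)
The plan is to prove the two implications of Theorem~\ref{aad} separately, using the identification $R(\mathbb A_\theta^*) = R(\mathbb I_\theta^*)$ from (\ref{range0}) together with the fact that $\|\mathbb A_\theta h\|_{L^2(P_\theta)}^2 = \langle \mathbb I_\theta h, \mathbb I_\theta h\rangle_{L^2_\lambda} = \|h\|_{LAN}^2$ from (\ref{lanip}), so that the efficient Fisher information (\ref{efffish}) can be rewritten purely in terms of $\mathbb I_\theta$ as
\begin{equation*}
i_{\theta,H,\psi} = \inf_{h \in H,\ \langle \psi, h\rangle_{\mathbb H} \neq 0} \frac{\|\mathbb I_\theta h\|_{L^2_\lambda}^2}{\langle \psi, h\rangle_{\mathbb H}^2}.
\end{equation*}

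\textbf{Sufficiency ($\psi \in R(\mathbb I_\theta^*) \Rightarrow i_{\theta,H,\psi}>0$).} Suppose $\psi = \mathbb I_\theta^* g$ for some $g \in L^2_\lambda(\mathcal X, V)$. By (\ref{range0}) we may equivalently write $\psi = \mathbb A_\theta^* \tilde\psi_\theta$ with $\tilde\psi_\theta \in L^2(P_\theta)$, and then the computation (\ref{fishy})--(\ref{lbd}) already carried out in the excerpt gives $i_{\theta,H,\psi}^{-1} = \|\tilde\psi_\theta\|_{L^2(P_\theta)}^2 < \infty$, hence $i_{\theta,H,\psi}>0$; alternatively, and more directly, for any $h \in H$ with $\langle \psi,h\rangle_{\mathbb H}\neq 0$ we have $\langle \psi, h\rangle_{\mathbb H} = \langle \mathbb I_\theta^* g, h\rangle_{\mathbb H} = \langle g, \mathbb I_\theta h\rangle_{L^2_\lambda}$, so Cauchy--Schwarz yields $\langle \psi,h\rangle_{\mathbb H}^2 \le \|g\|_{L^2_\lambda}^2 \|\mathbb I_\theta h\|_{L^2_\lambda}^2$ and therefore the displayed quotient is bounded below by $\|g\|_{L^2_\lambda}^{-2}>0$, uniformly in $h$. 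This direction is routine.

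\textbf{Necessity ($i_{\theta,H,\psi}>0 \Rightarrow \psi \in R(\mathbb I_\theta^*)$).} This is the substantive direction and follows van der Vaart's argument (Theorem 4.1 in \cite{vdV91}). Assume $i_{\theta,H,\psi} = c > 0$. Then for all $h \in H$ one has $\langle \psi, h\rangle_{\mathbb H}^2 \le c^{-1}\|\mathbb I_\theta h\|_{L^2_\lambda}^2$ (this inequality also holds trivially when $\langle\psi,h\rangle_{\mathbb H}=0$). First I would observe that $\psi$ annihilates $\ker(\mathbb I_\theta) \cap H$, hence (by density and continuity) $\ker(\overline{\mathbb I_\theta})$, so without loss of generality we work on the orthogonal complement where $\mathbb I_\theta$ is injective with dense range considerations in play. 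The inequality says that the linear functional $\mathbb I_\theta h \mapsto \langle \psi, h\rangle_{\mathbb H}$ is well-defined on the subspace $\mathbb I_\theta(H) \subset L^2_\lambda(\mathcal X,V)$ (well-definedness: if $\mathbb I_\theta h_1 = \mathbb I_\theta h_2$ then $\langle\psi, h_1-h_2\rangle_{\mathbb H}=0$ by the bound) and is bounded there with norm $\le c^{-1/2}$. By the Hahn--Banach theorem it extends to a bounded linear functional on all of $L^2_\lambda(\mathcal X,V)$, and by the Riesz representation theorem there exists $g \in L^2_\lambda(\mathcal X,V)$ with $\langle \psi, h\rangle_{\mathbb H} = \langle g, \mathbb I_\theta h\rangle_{L^2_\lambda} = \langle \mathbb I_\theta^* g, h\rangle_{\mathbb H}$ for all $h \in H$. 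Since $H$ is dense in $\bar H$ and $\psi \in \bar H$, this forces $\psi = \mathbb I_\theta^* g \in R(\mathbb I_\theta^*)$, as required.

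\textbf{Main obstacle.} The delicate point in the necessity direction is the well-definedness and boundedness of the functional on $\mathbb I_\theta(H)$: one must be careful that the bound $\langle\psi,h\rangle_{\mathbb H}^2 \le c^{-1}\|\mathbb I_\theta h\|_{L^2_\lambda}^2$ indeed holds for \emph{all} $h \in H$ and not merely on the set where the denominator in (\ref{efffish}) is nonzero — but this is immediate since when $\langle \psi, h\rangle_{\mathbb H}=0$ the inequality is trivial, and when it is nonzero it is exactly the definition of the infimum being $\ge c$. A second point requiring a line of care is passing from $H$ to $\bar H$: since both sides of $\langle\psi,h\rangle_{\mathbb H} = \langle \mathbb I_\theta^* g, h\rangle_{\mathbb H}$ are $\|\cdot\|_{\mathbb H}$-continuous in $h$ (the left by $\psi \in \bar H$, the right since $\mathbb I_\theta^* g \in \bar H$), equality on the dense subspace $H$ gives equality on $\bar H$, and testing against $h = \psi - \mathbb I_\theta^* g$ concludes. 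Beyond these, the argument is a clean application of Hahn--Banach and Riesz representation and should be short.
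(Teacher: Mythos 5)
Your proof is correct, and while the sufficiency direction (Cauchy--Schwarz against a representer $g$ with $\psi=\mathbb I_\theta^* g$) coincides with the paper's, your necessity argument takes a genuinely different route. You convert the hypothesis $i_{\theta,H,\psi}=c>0$ into the bound $\langle\psi,h\rangle_{\mathbb H}^2\le c^{-1}\|\mathbb I_\theta h\|_{L^2_\lambda}^2$ for all $h\in H$, read this as well-definedness and boundedness of the functional $\mathbb I_\theta h\mapsto\langle\psi,h\rangle_{\mathbb H}$ on $\mathbb I_\theta(H)$, extend it (Hahn--Banach, or simply by continuity to the closure and zero on the orthocomplement, since everything is Hilbert), and invoke Riesz representation plus density of $H$ in $\bar H$ to get $\psi=\mathbb I_\theta^* g$. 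The paper instead proves the contrapositive, and only under the extra hypothesis that the information operator $\mathbb I_\theta^*\mathbb I_\theta$ is compact on $\bar H$ (the case relevant to its PDE application): it diagonalises $\mathbb I_\theta^*\mathbb I_\theta$, uses $R(\mathbb I_\theta^*)=R((\mathbb I_\theta^*\mathbb I_\theta)^{1/2})$ (Lemma A.3 of \cite{vdV91}), and for $\psi\notin R(\mathbb I_\theta^*)$ explicitly builds directions $h_N\in H$ (approximating either $P_0\psi$ or the truncated series $\sum_{k\le N}\lambda_k^{-1}\langle e_k,\psi\rangle_{\mathbb H}e_k$) along which the quotient in (\ref{efffish}) is driven to zero. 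Your approach buys generality (no compactness needed) and brevity; the paper's buys an explicit spectral description of the degenerating directions, with the quantities $M_N$ measuring how the information collapses, which is in keeping with its later analysis of the range condition. Two cosmetic points: the preliminary remark about restricting to the orthocomplement of the kernel is unnecessary, since your well-definedness check already absorbs it; and the degenerate case where no $h\in H$ satisfies $\langle\psi,h\rangle_{\mathbb H}\neq 0$ (empty infimum) forces $\psi=0\in R(\mathbb I_\theta^*)$ by density, which your argument covers implicitly but could state in one line.
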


If $\psi \in R(\mathbb I_\theta^*)$ then positivity $i_{\theta, H, \psi}>0$ follows directly from (\ref{range0}), (\ref{fishy}) and the Cauchy-Schwarz inequality. The converse is slightly more involved -- we include a proof in Section \ref{spec} below for the case most relevant in inverse problems when the information operator $\mathbb I_\theta^* \mathbb I_\theta$ from (\ref{infoopid}) is \textit{compact} on $\bar H$ (see after Proposition \ref{adjform} below for the example relevant here). 

\smallskip

It follows that if $\psi \notin R(\mathbb I_\theta^*)$ then $\Psi(\theta)$ cannot be estimated at $\sqrt N$-rate in minimax risk. 

\begin{theorem}\label{n00}
Consider estimating a functional $\Psi(\theta)=\langle \psi, \theta \rangle_{\mathbb H}, \psi \in \bar H,$ based on i.i.d.~data $(Y_i, X_i)_{i=1}^N$ in the model (\ref{model}) satisfying Condition \ref{ass0} for some $\theta \in \Theta$ and tangent space $H$. Suppose  $i_{\theta, H, \psi}=0$. Then 
\begin{equation}\label{locmininf}
\liminf_{N \to \infty} \inf_{\tilde \psi_N: (V \times \mathcal X)^N \to \R} \sup_{h \in H, \|h\|_{\mathbb H} \le 1/\sqrt N} N E_{\theta+h}^N(\tilde \psi_N - \Psi(\theta+h))^2 =\infty.
\end{equation}
\end{theorem}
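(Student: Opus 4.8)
The plan is to obtain the divergence in (\ref{locmininf}) from the local asymptotic minimax bound (\ref{locmin}) itself, applied along a sequence of \emph{one-dimensional} submodels inside the tangent space $H$ on which the functional $\Psi$ becomes arbitrarily hard to estimate. The point is that although (\ref{locmin}) requires an efficient influence function (and hence $\psi\in R(\mathbb A_\theta^*)$) for the \emph{full} model, it applies to any submodel for which such an object exists, and lower bounds for a submodel transfer to the full model since the latter contains more competing hypotheses.

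The first step is to translate the hypothesis $i_{\theta,H,\psi}=0$ into a useful sequence. By (\ref{efffish}) there are $h_k\in H$ with $\langle\psi,h_k\rangle_{\mathbb H}\neq 0$ and $\|\mathbb A_\theta h_k\|_{L^2(P_\theta)}^2/\langle\psi,h_k\rangle_{\mathbb H}^2\to 0$; since $H$ is a linear space we may rescale so that $\langle\psi,h_k\rangle_{\mathbb H}=1$, whence $\epsilon_k:=\|\mathbb A_\theta h_k\|_{L^2(P_\theta)}\to 0$. I will also arrange that $\epsilon_k>0$: if $\mathbb A_\theta h_k=0$ for some $k$, I replace $h_k$ by a small perturbation $h_k+\eta_k h_\sharp$ (and renormalise) for a fixed $h_\sharp\in H$ with $\mathbb A_\theta h_\sharp\neq 0$ and $\eta_k\downarrow 0$, which keeps $\langle\psi,\cdot\rangle_{\mathbb H}\to 1$ and forces $0<\|\mathbb A_\theta(\cdot)\|_{L^2(P_\theta)}\to 0$. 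This manoeuvre needs $\mathbb A_\theta\not\equiv 0$ on $H$, equivalently $\mathbb I_\theta\not\equiv 0$ on $H$ by (\ref{lanip}); the degenerate alternative, where $\mathscr G$ has vanishing derivative in every tangent direction, does not arise in the inverse problems of interest and may be discarded. Thus we may assume $0<\epsilon_k\downarrow 0$ and $\langle\psi,h_k\rangle_{\mathbb H}=1$.

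Next, for fixed $k$, I would pass to the one-dimensional submodel with parameter set $\{\theta+sg_k:|s|<\epsilon\}$ and tangent space $H_k:=\R g_k\subset H$, where $g_k:=h_k/\|h_k\|_{\mathbb H}$. This submodel inherits Condition \ref{ass0}, so by Theorem \ref{dqmprop} it is DQM at $\theta$ with score $\mathbb A_\theta$ restricted to $H_k$, and $\mathbb A_\theta g_k=\|h_k\|_{\mathbb H}^{-1}\mathbb A_\theta h_k\neq 0$. Relative to this submodel the linear functional $\Psi$ has the efficient influence function $\tilde\psi^{(k)}_\theta:=\epsilon_k^{-2}\,\mathbb A_\theta h_k\in\mathscr P_{H_k}$: for $h=sg_k\in H_k$ one checks
\[
\langle\tilde\psi^{(k)}_\theta,\mathbb A_\theta h\rangle_{L^2(P_\theta)}=\frac{s}{\epsilon_k^2\,\|h_k\|_{\mathbb H}}\,\|\mathbb A_\theta h_k\|_{L^2(P_\theta)}^2=\frac{s}{\|h_k\|_{\mathbb H}}=\langle\psi,h\rangle_{\mathbb H},
\]
so (\ref{riesz}) holds, and $\|\tilde\psi^{(k)}_\theta\|_{L^2(P_\theta)}^2=\epsilon_k^{-4}\|\mathbb A_\theta h_k\|_{L^2(P_\theta)}^2=\epsilon_k^{-2}$.

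Finally I would invoke (\ref{locmin}) for each of these submodels. Since $\{h\in H_k:\|h\|_{\mathbb H}\le 1/\sqrt N\}=\{sg_k:|s|\le 1/\sqrt N\}\subseteq\{h\in H:\|h\|_{\mathbb H}\le 1/\sqrt N\}$, and since the infimum runs over estimators $\tilde\psi_N$ that do not depend on which submodel is considered, for every $k$
\begin{align*}
&\liminf_{N\to\infty}\inf_{\tilde\psi_N}\sup_{h\in H,\ \|h\|_{\mathbb H}\le 1/\sqrt N}NE_{\theta+h}^N(\tilde\psi_N-\Psi(\theta+h))^2\\
&\qquad\ge\liminf_{N\to\infty}\inf_{\tilde\psi_N}\sup_{h\in H_k,\ \|h\|_{\mathbb H}\le 1/\sqrt N}NE_{\theta+h}^N(\tilde\psi_N-\Psi(\theta+h))^2\ \ge\ \|\tilde\psi^{(k)}_\theta\|_{L^2(P_\theta)}^2=\epsilon_k^{-2}.
\end{align*}
The left-hand side does not depend on $k$, so letting $k\to\infty$ gives (\ref{locmininf}). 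The one genuinely delicate step is the reduction in the second paragraph: one must ensure the bad sequence can be chosen with $\mathbb A_\theta h_k\neq 0$, so that the one-dimensional submodels are non-degenerate and carry an efficient influence function — everything else is a direct specialisation of the machinery already set up.
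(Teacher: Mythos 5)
Your proposal is correct in substance but takes a genuinely different route from the paper. You stay entirely inside the original model: from $i_{\theta,H,\psi}=0$ you extract directions $h_k\in H$ with $\langle\psi,h_k\rangle_{\mathbb H}=1$ and $\|\mathbb A_\theta h_k\|_{L^2(P_\theta)}=\epsilon_k\to0$, pass to the one-dimensional submodels spanned by $g_k$, exhibit the efficient influence function $\epsilon_k^{-2}\mathbb A_\theta h_k$ there (your verification of (\ref{riesz}) and the computation $\|\tilde\psi^{(k)}_\theta\|_{L^2(P_\theta)}^2=\epsilon_k^{-2}$ are correct), and let the resulting local asymptotic minimax bounds diverge, using that a supremum over a submodel is dominated by the supremum over the full model. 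The paper instead \emph{augments} the data with auxiliary observations $Z_i\sim\mathcal N(\langle\theta,\psi\rangle_{\mathbb H},\sigma^2)$: in the augmented model the efficient Fisher information equals $i_{\theta,H,\psi}+\sigma^{-2}=\sigma^{-2}>0$, the functional is pathwise differentiable by Riesz representation, the bound (\ref{auxrisk}) gives $\sigma^2$ for the more informative data, and letting $\sigma^2\to\infty$ concludes. Your route makes the mechanism transparent (a sequence of one-dimensional Cram\'er--Rao bounds $\langle\psi,h_k\rangle_{\mathbb H}^2/\|\mathbb A_\theta h_k\|_{L^2(P_\theta)}^2\to\infty$) and needs no enlargement of the sample space; the paper's augmentation buys automatic regularity, since the auxiliary coordinate always contributes $\sigma^{-2}\langle\psi,h\rangle_{\mathbb H}^2$ to the LAN norm, so an efficient influence function exists with no nondegeneracy caveat.

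Two caveats. First, the corner case $\mathbb A_\theta\equiv0$ on $H$, which you discard as uninteresting, is genuinely not covered by your construction (no $h_\sharp$ with $\mathbb A_\theta h_\sharp\neq0$ exists, and your one-dimensional submodels then carry no efficient influence function); since Theorem \ref{n00} is stated for the general model (\ref{model}) under Condition \ref{ass0}, this case should be handled separately (e.g.\ by a direct contiguity/two-point argument), whereas the paper's augmentation covers it without extra work. Second, your submodel bound $\epsilon_k^{-2}$ — like the paper's (\ref{auxrisk}) — requires reading (\ref{locmin}) in its standard van der Vaart (Theorem 25.21) form, with local alternatives $\theta+h/\sqrt N$ ranging over arbitrarily large finite subsets of the tangent space: over the literal ball $\|h\|_{\mathbb H}\le1/\sqrt N$ the constant estimator $\tilde\psi_N\equiv\Psi(\theta)$ caps the local risk at $\|\psi\|_{\mathbb H}^2$, and realising the bound $\epsilon_k^{-2}$ needs perturbations of $\mathbb H$-norm of order $\|h_k\|_{\mathbb H}/(\epsilon_k\sqrt N)$. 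This imprecision is shared with the paper's own formulation, so it is not held against you, but it is the point at which your inequality chain must be interpreted in the sup-over-finite-subsets sense rather than literally.
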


The last theorem can be proved following the asymptotic arguments leading to the proof of (\ref{locmin}) in Theorem 25.21 in \cite{vdV98}.  A proof that follows more directly from the preceding developments is as follows: Augment the observation space to include measurements $(Z_i, Y_i, X_i)_{i=1}^N \sim \bar P_\theta^N$ where the $Z_i \sim^{iid} \mathcal N(\langle \theta, \psi \rangle_\mathbb H, \sigma^2)$ are independent of the $(Y_i, X_i)$'s, and where $\sigma^2$ is known but arbitrary. The new model $\bar{\mathscr P_N} = \{\bar P_\theta^N: \theta \in \Theta\}$ has `augmented' LAN norm from (\ref{lanip}) given by $$\|\bar{\mathbb A}_\theta h\|_{L^2(\bar P_\theta)}^2 = \|\mathbb A_\theta h \|^2_{L^2(P_\theta)} +  \sigma^{-2}\langle \psi, h \rangle_{\mathbb H}^2,~~ h \in \bar H,$$
as can be seen from a standard tensorisation argument for independent sample spaces and the fact that a $\mathcal N(\langle \theta, \psi \rangle_\mathbb H, \sigma^2)$ model has LAN `norm' $\sigma^{-2}\langle \psi, h \rangle_{\mathbb H}^2$, by a direct calculation with Gaussian densities. In particular the efficient Fisher information from (\ref{efffish}) for estimating $\langle \psi, \theta\rangle_\mathbb H$ from the augmented data  is now of the form
$$ \bar i_{\theta, H, \psi} = \inf_{h} \frac{\|\mathbb A_\theta h \|^2_{L^2(P_\theta)} +  \sigma^{-2}\langle \psi, h \rangle_{\mathbb H}^2}{\langle  \psi,  h \rangle_{\mathbb H}^2} = i_{\theta, H, \psi} + \sigma^{-2} = \sigma^{-2}>0.
$$ 
Note next that \textit{mutatis mutandis}, (\ref{invfi}), (\ref{locmin}), (\ref{lbd}) all hold in the augmented model $\bar {\mathscr P_N}$ with score operator $\bar {\mathbb A}_\theta$ and tangent space $H$, and that the linear functional $\Psi(\cdot)=\langle \psi, \cdot\rangle_{\mathbb H}$ now verifies (\ref{riesz}) as it is continuous on $H$ for the $\|\bar{\mathbb A}_\theta[\cdot]\|_{L^2(\bar P_\theta)}$-norm so that we can invoke the Riesz representation theorem to the effect that $$\Psi(h) = \langle \bar{\mathbb A}\tilde h, \bar{\mathbb A}h\rangle_{L^2(\bar P_\theta)},~~h \in H, \text{ and some } \tilde \psi_\theta =  \bar{\mathbb A}\tilde h \in \overline{(\bar {\mathscr P})_H}.$$  Thus the asymptotic minimax theorem in the augmented model gives
\begin{equation}\label{auxrisk}
\liminf_{N \to \infty} \inf_{\bar \psi_N: (\R \times V \times \mathcal X)^N \to \R} \sup_{h \in H, \|h\|_{\mathbb H} \le 1/\sqrt N} N E_{\theta+h}^N(\bar \psi_N - \Psi(\theta+h))^2 \ge \bar i^{-1}_{\theta, H, \psi} = \sigma^2
\end{equation}
for estimators $\bar \psi$ based on the more informative data. The asymptotic local minimax risk in (\ref{locmininf})  exceeds the quantity in the last display, and letting $\sigma^2 \to \infty$ implies the result.

\section{Application to a divergence form PDE}\label{apppde}

The results from the previous section describe how in a non-linear regression model (\ref{model}) under Condition \ref{ass0}, the possibility of $\sqrt N$-consistent estimation of linear functionals $\Psi(\theta)=\langle \psi, \theta \rangle_{\mathbb H}$ essentially depends on whether $\psi$ lies in the range of $\mathbb I_\theta^*$. A sufficient condition for this is that $\psi$ lies in the range of the information operator $\mathbb A_\theta^* \mathbb A_\theta=\mathbb I_\theta^* \mathbb I_\theta$, and the results in \cite{MNP20} show that the lower bound in (\ref{locmin}) can be attained by concrete estimators in this situation. The general theory was shown to apply to a class of PDEs of Schr\"odinger type \cite{N17, MNP20} and to non-linear $X$-ray transforms \cite{MNP20, MNP17}, with smooth test functions $\psi \in C^\infty$.

\smallskip

We now exhibit a PDE inverse problem where the range constraint from Theorem \ref{aad} fails, fundamentally limiting the possibility of efficient $\sqrt N$-consistent estimation of `nice' linear functionals. In particular we will show that, unlike for the Schr\"odinger type equations considered in \cite{N17, MNP20}, for this PDE the inverse Fisher information $\sigma_\theta^2(\psi)$ does not exist for a large class of functionals $\Psi(\theta)=\langle \theta, \psi \rangle_{L^2}$, including generic examples of \textit{smooth non-negative} $\psi \in C^\infty$. This implies in particular the non-existence of a `functional' Bernstein-von Mises phenomenon that would establish asymptotic normality of the posterior distribution of the process $\{\langle \theta, \psi \rangle_{L^2}: \psi \in C^\infty\}$ (comparable to the ones obtained in \cite{CN13, CN14, N17}).

\subsection{Basic setting}

Let $\mathcal O \subset \mathbb R^d$ be a bounded smooth domain with boundary $\partial \mathcal O$ and, for convenience, of unit volume $\lambda(\mathcal O)=1$, where $\lambda$ is Lebesgue measure. Denote by $C^\infty(\mathcal O)$ the set of all smooth real-valued functions on $\mathcal O$ and by $C^\infty_0(\mathcal O)$ the subspace of such functions of compact support in $\mathcal O$. Let $L^2=L^2_\lambda(\mathcal O)$ be the usual Hilbert space with inner product $\langle \cdot, \cdot \rangle_{L^2}$. The $L^2_\lambda$-Sobolev spaces $H^\beta=H^\beta(\mathcal O)$ of order $\beta \in \mathbb N$ are also defined in the standard way, as are the spaces $C^\beta(\mathcal O)$ that have all partial derivatives bounded and continuous up to order $\beta$.

\smallskip

For a \textit{conductivity} $\theta \in C^\infty(\mathcal O)$,  \textit{source} $f \in C^\infty(\mathcal O)$ and \textit{boundary temperatures} $g \in C^\infty(\partial \mathcal O)$, consider solutions $u=u_\theta=u_{\theta, f,g}$ of the PDE
\begin{align}\label{PDE}
\nabla \cdot (\theta \nabla u) &=f ~\text{in } \mathcal O,\\
u &=g \text{ on } \partial \mathcal O \notag.
\end{align}
Here $\nabla, \Delta, \nabla \cdot$ denote the gradient, Laplace and divergence operator, respectively. We ensure ellipticity by assuming  $\theta \ge \theta_{min}>0$ throughout $\mathcal O$. 

\smallskip

We write $\mathcal L_\theta=\nabla \cdot (\theta \nabla (\cdot))$ for the `divergence form' operator featuring on the l.h.s.~in (\ref{PDE}).  A unique solution $u_\theta \in C^\infty(\mathcal O)$ to (\ref{PDE}) exists (e.g., Theorem 8.3 and Corollary 8.11 in \cite{GT98}). The operator $\mathcal L_\theta$ has an inverse integral operator 
\begin{equation} \label{invlap}
V_\theta: L_\lambda^2(\mathcal O) \to H^2(\mathcal O) \cap \{h_{|\partial \mathcal O}=0\}
\end{equation} for Dirichlet boundary conditions, that is, it satisfies $V_\theta[f]=0$ at $\partial \mathcal O$ and $\mathcal L_\theta V_\theta [f] = f$ on $\mathcal O$ for all $f \in L_\lambda^2(\mathcal O)$. Moreover the operator $V_f$ is self-adjoint on $L_\lambda^2(\mathcal O)$.  One further shows that whenever $f \in H^2(\mathcal O)$ satisfies $f_{|\partial \mathcal O}=0$, then $ V_\theta \mathcal L_\theta [f]= f$. These standard facts for elliptic PDEs can be proved, e.g., as in Section 5.1 in \cite{Tay11} or Chapter 2 in \cite{LM72}.

\smallskip

To define the `forward map' $\G$ we consider a model $\Theta$ of conductivities arising as a $H^\beta$-neighbourhood of the standard Laplacian of radius $\eta>0$, specifically
\begin{equation}\label{bigtheta}
\Theta  = \Big\{\theta \in C^\infty(\mathcal O), \inf_x\theta(x)>\frac{1}{2}, \theta_{|\partial \mathcal O}=1: \|\theta-1\|_{H^\beta(\mathcal O)} < \eta \Big\},~~\beta>1+d.
\end{equation}
The inverse problem is to recover $\theta$ from solutions 
\begin{equation}\label{forfad}
\G: \Theta \to L_\lambda^2(\mathcal O), ~~~\G(\theta) \equiv u_\theta
\end{equation}
of (\ref{PDE}) where we emphasise that $f,g,$ as well as $\theta_{|\partial \mathcal O}$ are assumed to be \textit{known} (see also Remark \ref{normald}). The particular numerical choices $1=\theta_{|\partial \mathcal O}$ and $1/2 =\theta_{min}$ are made for notational convenience. For independent $\varepsilon_i \sim^{iid} \mathcal N(0,1),~ X_i \sim^{iid} \lambda$ we then observe data 
\begin{equation} \label{modelp}
(Y_i, X_i)_{i=1}^N \in (\R \times \mathcal O)^N \sim P_\theta^N,~~Y_i = u_\theta(X_i) + \varepsilon_i, 
\end{equation}
from model (\ref{model}). Note that unlike in statistical `Calder\'on problems' \cite{AN19}, we measure $u_\theta$ throughout the entire domain $\mathcal O$. Before we take a closer look at the \textit{local} information geometry of the map $\G$ arising from the PDE (\ref{PDE}), let us first give conditions under which the problem of inferring $\theta$ from $(Y_i, X_i)_{i=1}^N$ in (\ref{modelp}) has a consistent solution.

\subsection{Global injectivity and model examples}

Under suitable constellations of $f,g$ in (\ref{PDE}), the non-linear map $\theta \mapsto u_\theta$ can be injective, and `stability' properties of $\G$ are well studied at least since \cite{R81}, we refer to the recent contributions \cite{ABD17, NVW18, GN20} and the many references therein. For instance one can show:
\begin{proposition}\label{oldboy}
Let $\theta_1, \theta_2 \in C^\infty(\mathcal O)$ be conductivities such that $\|\theta_i\|_{C^1} \le B$, $\theta_1=\theta_2$ on $\partial \mathcal O$, and denote by $u_{\theta_i}$ the corresponding solutions to (\ref{PDE}). Assume 
\begin{equation} \label{lowbd}
\inf_{x \in \mathcal O}\big[\Delta u_{\theta}(x) + \mu |\nabla u_{\theta}(x)|_{\R^d}^2 \big] \ge c_0>0
\end{equation} 
holds for $\theta=\theta_1$ and some $\mu>0$. Then we have for some $C=C(B, \mu, c_0, \mathcal O)>0$,
\begin{equation}\label{globstab}
\|\theta_1 - \theta_2\|_{L^2} \le C \|u_{\theta_1} - u_{\theta_2}\|_{H^2}.
\end{equation}
\end{proposition}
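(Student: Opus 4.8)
The plan is to exploit the fact that the difference $w := u_{\theta_1} - u_{\theta_2}$ satisfies an equation in which $\theta_1 - \theta_2$ appears multiplied by $\nabla u_{\theta_2}$, and then to convert this into a pointwise lower bound via the ellipticity-type hypothesis \eqref{lowbd}. First I would write down the PDE solved by $w$: subtracting the two copies of \eqref{PDE} gives $\nabla\cdot(\theta_1 \nabla u_{\theta_1}) - \nabla\cdot(\theta_2 \nabla u_{\theta_2}) = 0$, hence $\nabla\cdot(\theta_1 \nabla w) = -\nabla\cdot((\theta_1-\theta_2)\nabla u_{\theta_2})$ in $\mathcal O$, with $w = 0$ on $\partial\mathcal O$ since $\theta_1 = \theta_2$ there forces $g$ to be the same (both solve the same Dirichlet problem). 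Setting $\sigma := \theta_1 - \theta_2$, which also vanishes on $\partial\mathcal O$, this reads $\nabla\cdot(\sigma \nabla u_{\theta_2}) = -\nabla\cdot(\theta_1 \nabla w)$, i.e. $\sigma \Delta u_{\theta_2} + \nabla\sigma\cdot\nabla u_{\theta_2} = -\nabla\cdot(\theta_1\nabla w)$.

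The key manoeuvre is to treat this as a first-order linear transport-type equation for $\sigma$ along the gradient flow of $u_{\theta_2}$ — or, more robustly, to multiply by a suitable test function and integrate. Concretely I would multiply the identity $\sigma\Delta u_{\theta_2} + \nabla\sigma\cdot\nabla u_{\theta_2} = -\nabla\cdot(\theta_1\nabla w)$ by $\sigma e^{\mu u_{\theta_2}}$ (or a comparable weight chosen to generate the combination $\Delta u_{\theta} + \mu|\nabla u_{\theta}|^2$), integrate over $\mathcal O$, and integrate by parts in the term $\int \sigma\,\nabla\sigma\cdot\nabla u_{\theta_2}\, e^{\mu u_{\theta_2}} = \tfrac12\int \nabla(\sigma^2)\cdot\nabla u_{\theta_2}\,e^{\mu u_{\theta_2}}$. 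Since $\sigma$ vanishes on $\partial\mathcal O$ the boundary terms drop, and the integration by parts produces $-\tfrac12\int \sigma^2 \nabla\cdot(e^{\mu u_{\theta_2}}\nabla u_{\theta_2}) = -\tfrac12\int \sigma^2 e^{\mu u_{\theta_2}}(\Delta u_{\theta_2} + \mu|\nabla u_{\theta_2}|^2)$. Combining with the $\int \sigma^2 e^{\mu u_{\theta_2}}\Delta u_{\theta_2}$ term from the left-hand side, one is left with $\tfrac12\int \sigma^2 e^{\mu u_{\theta_2}}(\Delta u_{\theta_2} + \mu|\nabla u_{\theta_2}|^2)$ on the left, which by \eqref{lowbd} (applied with $\theta = \theta_1$; note that under the $C^1$ bound and the smallness one can transfer the hypothesis between $\theta_1$ and $\theta_2$, or simply state it for the reference solution and absorb the discrepancy) is bounded below by $\tfrac{c_0}{2}e^{-\mu U}\|\sigma\|_{L^2}^2$ for a constant $U$ controlling $\|u_{\theta_2}\|_\infty$. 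The right-hand side is $-\int \sigma e^{\mu u_{\theta_2}}\nabla\cdot(\theta_1\nabla w)$, which after one more integration by parts becomes $\int \nabla(\sigma e^{\mu u_{\theta_2}})\cdot\theta_1\nabla w$, and this is bounded by $C(B,\mu,\|u_{\theta_2}\|_{C^1})\|\sigma\|_{H^1}\|w\|_{H^1}$ — or, pushing the derivative the other way, by $C\|\sigma\|_{L^2}\|w\|_{H^2}$, which is what we want for \eqref{globstab}.

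The main obstacle I anticipate is the bookkeeping around which solution the hypothesis \eqref{lowbd} is imposed on and how to move derivatives so that only $\|w\|_{H^2}$ (and not higher norms of $w$, nor $\|\sigma\|_{H^1}$ on the right) survives: the clean estimate requires that after all integrations by parts the right-hand side carry at most $\|\sigma\|_{L^2}\cdot\|w\|_{H^2}$, so one must avoid landing a derivative on $\sigma$. This forces the integration by parts to be done in the direction that transfers two derivatives onto $w$, using that $\nabla\cdot(\theta_1\nabla w)$ is already the "divergence of a gradient" — so one should not integrate that term by parts at all, but instead bound $\int \sigma e^{\mu u_{\theta_2}} \nabla\cdot(\theta_1\nabla w)$ directly by $\|\sigma\|_{L^2}\|e^{\mu u_{\theta_2}}\|_\infty \|\nabla\cdot(\theta_1\nabla w)\|_{L^2} \le C(B,\mu,U)\|\sigma\|_{L^2}\|w\|_{H^2}$, and then divide through by $\|\sigma\|_{L^2}$. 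A secondary technical point is justifying the manipulations: since $\theta_i \in C^\infty$ and the domain is smooth, $u_{\theta_i} \in C^\infty(\bar{\mathcal O})$ by elliptic regularity, so all integrations by parts are licit and the constants depend only on $B$, $\mu$, $c_0$, and $\mathcal O$ (the sup-norm $U$ of $u_{\theta_2}$ being controlled by $B$ and the fixed data $f,g$ via the maximum principle). I would also double-check whether the statement intends $u_{\theta_1}$ or $u_{\theta_2}$ in \eqref{lowbd}; the argument above naturally wants the weight built from $u_{\theta_2}$, but since the two differ by $w$ whose $C^1$ norm is controlled by $\|w\|_{H^2}$ (in $d$ low enough, or by interpolation), one can freely swap at the cost of harmless lower-order terms absorbed into the left-hand side for $\|w\|_{H^2}$ small, or simply restate the hypothesis symmetrically.
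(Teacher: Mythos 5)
Your core strategy---test the subtracted equation against a weighted multiple of $\sigma=\theta_1-\theta_2$, use the divergence theorem (licit since $\sigma$ vanishes on $\partial\mathcal O$) to generate the combination $\Delta u+\mu|\nabla u|_{\R^d}^2$, invoke \eqref{lowbd}, and bound the right-hand side directly by $\|\sigma\|_{L^2}\,\|\nabla\cdot(\theta\nabla w)\|_{L^2}\lesssim_B\|\sigma\|_{L^2}\|w\|_{H^2}$ without putting any derivative on $\sigma$---is essentially the paper's proof (Lemma \ref{coerc} applied with the test function $v=e^{-\mu u_{\theta}}h$, followed by a one-line reduction). The genuine gap is in how you match the hypothesis to your decomposition. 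You split the difference of the two copies of \eqref{PDE} as $\nabla\cdot(\sigma\nabla u_{\theta_2})=-\nabla\cdot(\theta_1\nabla w)$, so your weighted identity produces $\int_{\mathcal O}\sigma^2 e^{\,\cdot}\,(\Delta u_{\theta_2}+\mu|\nabla u_{\theta_2}|^2)$, whereas \eqref{lowbd} is assumed only for $\theta=\theta_1$. Neither of your proposed repairs works as stated: the proposition contains no smallness assumption on $\|w\|_{H^2}$ (it is a global stability estimate), and a pointwise transfer of \eqref{lowbd} from $u_{\theta_1}$ to $u_{\theta_2}$ would require sup-norm control of $\Delta w$ and $\nabla w$, which $\|w\|_{H^2}$ does not give for $d\ge 2$ ($H^2\not\hookrightarrow C^1$, and $\|\Delta w\|_\infty$ is never controlled by $\|w\|_{H^2}$). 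The clean fix---and the paper's choice---is to split the other way, $\nabla\cdot(\sigma\nabla u_{\theta_1})=\nabla\cdot\big(\theta_2\nabla(u_{\theta_2}-u_{\theta_1})\big)$, i.e. pair $\sigma$ with the solution for which \eqref{lowbd} is assumed and put the other conductivity on the right; then $\|\nabla\cdot(\theta_2\nabla w)\|_{L^2}\le C(B)\|w\|_{H^2}$ and no transfer is needed. (Your route could also be salvaged by keeping the discrepancy inside the integral and bounding it via $\|\sigma\|_\infty\le 2B$ and Cauchy--Schwarz by $C\|\sigma\|_{L^2}\|w\|_{H^2}$, but that is an additional argument you did not supply.)

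A smaller point: with the weight $e^{+\mu u}$ your integration by parts yields $\tfrac12\int_{\mathcal O}\sigma^2 e^{\mu u}\big(\Delta u-\mu|\nabla u|^2\big)$, not the ``$+$'' combination you display; the exponent must be negative, e.g. test with $\sigma e^{-2\mu u_{\theta_1}}$ (equivalently $v=e^{-\mu u_{\theta_1}}\sigma$ as in Lemma \ref{coerc}), at the price of harmless factors $e^{\pm 2\mu\|u_{\theta_1}\|_\infty}$, with $\|u_{\theta_1}\|_\infty$ controlled by the fixed data and $B$. You did flag that the weight should be ``chosen to generate the combination'', so this is a slip rather than a conceptual error, but the displayed identity is incorrect as written. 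With these two corrections (the opposite splitting and the corrected weight) your argument coincides with the paper's.
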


Based on (\ref{globstab}) one can show (see \cite{GN20, NVW18}) that we can recover $\theta$ in $L^2$-loss by some estimator $\hat \theta =\hat \theta((Y_i, X_i)_{i=1}^N )$ at a `nonparametric rate' $\|\hat \theta - \theta\|_{L^2(\mathcal O)} =O_{P_\theta^N}(N^{-\gamma})$ for some $0<\gamma<1/2$, uniformly in $\Theta$. We wish to study here inference on  linear functionals $$\Psi(\theta) = \langle \psi, \theta \rangle_{L^2(\mathcal O)},~~ \psi \in C^\infty_0(\mathcal O).$$ As we can bound the `plug-in' estimation error $|\langle \psi, \theta - \hat \theta\rangle_{L^2}|$ by $\|\hat \theta-\theta\|_{L^2}$, the convergence rate $N^{-\gamma}$ carries over to estimation of $\Psi$. Nevertheless we will show that there are fundamental limitations for \textit{efficient} inference on $\Psi$ at the `semi-parametric' rate ($\gamma=1/2$). This will be illustrated with two model examples for which the `injectivity' condition (\ref{lowbd}) can be checked.

\smallskip

\begin{example}{(No critical points). } \label{ex1}\normalfont
In (\ref{PDE}), take
 \begin{equation}\label{repex}
 f=2,~~ g=\frac{|\cdot|_{\R^d}^2-1}{d}
 \end{equation}
 Then for the standard Laplacian $\theta=1$ we have $u_1=g$ on $\bar {\mathcal O}, \Delta u_1=2$, and hence $\nabla u_1 = 2x/d$, which satisfies $\inf_{x \in \mathcal O}|\nabla u_1(x)|_{\R^d} \ge c>0$ 
 for any domain $\mathcal O \subset \R^d$ \textit{separated away from the origin}. This lower bound extends to 
\begin{equation} \label{gradlb}
\inf_{\theta \in \Theta} \inf_{x \in \mathcal O}|\nabla u_\theta(x)|_{\R^d} \ge c_\nabla>0
\end{equation}
for $\eta$ small enough in (\ref{bigtheta}), by perturbation: arguing as in (\ref{diffp}) below and from standard elliptic regularity estimates (Lemma 23 in \cite{NVW18} and as in (\ref{ellipt})), we have for $b>1+d/2, \beta>b+d/2$ (s.t.~$H^\beta \subset C^b$)
\begin{align}\label{pertubio}
\|u_\theta - u_1\|_{C^1} &\lesssim \|V_1[\nabla \cdot [(\theta-1) \nabla u_\theta]\|_{H^{b}}  \lesssim \|(\theta-1)\nabla u_\theta\|_{H^{b-1}} \notag \\
& \lesssim \|\theta-1\|_{H^{b-1}} \|u_\theta\|_{C^{b}} \le \|\theta-1\|_{H^\beta} \|u_\theta\|_{H^\beta}  < C\eta.
\end{align}
In view of $\sup_{\theta \in \Theta}\|\Delta u_\theta\|_\infty<\infty$ and (\ref{gradlb}), condition (\ref{lowbd}) is verified for $\mu$ large enough and all $\theta \in \Theta$.
\end{example}

The situation in Example \ref{ex1} where the gradient $\nabla u_\theta$ never vanishes is somewhat atypical, and one may expect $u_\theta$ to possess a \textit{finite} number of isolated critical points $x_0$ (where $\nabla u_\theta(x_0)$ vanishes), see, e.g., \cite{ABD17} and references therein. The next example encompasses a prototypical such situation with an interior minimum. See also Remark \ref{intrcrit} for the case of a saddle point. Further examples with more than one critical point are easily constructed, too. 

\begin{example}{(Interior minimum). } \label{ex2} \normalfont
Consider the previous example where now $\mathcal O$ is the unit disk in $\R^2$ centred at the origin. In other words, in (\ref{PDE}) we have $f=2$ and $g_{|\partial \mathcal O}=0$, corresponding to a classical Dirichlet problem with source $f$. In this case $u_1$ takes the same form as in the previous example but now has a gradient $\nabla u_1=x$ that vanishes at the origin $0 \in \R^2$, corresponding to the unique minimum of $u_1$ on $\mathcal O$. The injectivity condition (\ref{lowbd}) is still satisfied for all $\theta \in \Theta$ simply since (\ref{PDE}) implies $$0<2=\theta \Delta u_\theta + \nabla \theta \cdot \nabla u_\theta~~\text{on } \mathcal O,$$ so that either $\Delta u_{\theta} \ge 1/(2\|\theta\|_\infty)$ or $|\nabla u_\theta(x)|_{\R^d} \ge 1/(2\|\theta\|_{C^1})$ has to hold on $\mathcal O$. In this example, the constraints that $\eta$ be small enough as well as that $\theta_1=\theta_2$ on $\partial \mathcal O$ in Proposition \ref{oldboy} can in fact be removed, see Lemma 24 in \cite{NVW18}.
\end{example}

\subsection{The score operator and its adjoint}

To connect to Section \ref{infog} let us regard $\Theta$ from (\ref{bigtheta}) as a subset of the Hilbert space $\mathbb H = L_\lambda^2(\mathcal O)$, and take $\G(\theta)$ from (\ref{forfad}); hence we set $\mathcal X= \mathcal O, V=\R$, $\lambda=dx$ (Lebesgue measure). 

\smallskip

 As `tangent space' $H \subset \mathbb H$ we take all smooth perturbations of $\theta$ of compact support; \begin{equation} \label{tangentspace}
H = C^\infty_0(\mathcal O),
\end{equation}
so that the paths $\theta_{s,h}= \theta+sh, ~\theta \in \Theta, h \in H,$ lie in $\Theta$ for all $s \in \R$ small enough. The closure $\bar H$ of $H$ for $\|\cdot\|_\mathbb H$ equals $\bar H = \mathbb H=L_\lambda^2(\mathcal O).$ We now check Condition \ref{ass0}, restricting to $d \le 3$ to expedite the proof.

\begin{theorem}\label{lanell}
Assume $d \le 3$. Let $\Theta$ be as in (\ref{bigtheta}) and let the tangent space $H$ be as in (\ref{tangentspace}). The forward map $\theta \mapsto  \G(\theta)$ from (\ref{forfad}) satisfies Condition \ref{ass0} for every $\theta \in \Theta$, with uniform bound $U_\G =U_\G(\|g\|_\infty, \|f\|_\infty)$ and with
\begin{equation}\label{redscop}
\mathbb I_\theta(h) \equiv -V_\theta[\nabla \cdot (h \nabla u_\theta)],~~ h \in H.
\end{equation}
In particular $\mathbb I_\theta$ extends to a bounded linear operator on $\mathbb  H$.
 \end{theorem}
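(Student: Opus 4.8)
The plan is to verify each part of Condition~\ref{ass0} in turn, using the representation of the linearisation coming from differentiating the PDE (\ref{PDE}), together with standard elliptic regularity for the Dirichlet solution operator $V_\theta$. First I would establish the uniform bound $\sup_{\theta \in \Theta}\|u_\theta\|_\infty \le U_\G$: this follows from the maximum principle (or from $u_\theta = V_\theta f + (\text{harmonic extension of }g)$ together with elliptic estimates), using that $\theta \ge 1/2$ and $\|\theta\|_{H^\beta}$ is controlled with $\beta > 1+d \ge \beta - d/2 > d/2$ so that $H^\beta \hookrightarrow C^1$ and the coefficients of $\mathcal L_\theta$ are uniformly bounded in $C^1$; this also yields a uniform bound $\sup_{\theta \in \Theta}\|u_\theta\|_{H^\beta} \le M$ (or at least in $H^2 \cap C^1$), which will be needed repeatedly.

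Next I would derive (\ref{redscop}). Formally, differentiating $\nabla\cdot(\theta_{s,h}\nabla u_{\theta_{s,h}}) = f$ in $s$ at $s=0$ gives $\nabla\cdot(h\nabla u_\theta) + \nabla\cdot(\theta \nabla \dot u) = 0$ where $\dot u$ is the derivative of $u_{\theta_{s,h}}$ in direction $h$; since $h$ has compact support, $u_{\theta_{s,h}} = g$ on $\partial\mathcal O$ for all $s$, so $\dot u = 0$ on $\partial\mathcal O$, whence $\dot u = -V_\theta[\nabla\cdot(h\nabla u_\theta)] = \mathbb I_\theta(h)$. To make this rigorous and obtain the $o(s)$ estimate (\ref{limit}) I would write, with $w_s := u_{\theta_{s,h}} - u_\theta - s\mathbb I_\theta(h)$, an equation of the form $\mathcal L_{\theta}w_s = -s\,\nabla\cdot\big(h\,\nabla(u_{\theta_{s,h}}-u_\theta)\big)$ (using $\mathcal L_{\theta_{s,h}}u_{\theta_{s,h}} = \mathcal L_\theta u_\theta = f$ and rearranging), so that $w_s = -s\,V_\theta\big[\nabla\cdot(h\nabla(u_{\theta_{s,h}}-u_\theta))\big]$; then $\|w_s\|_{L^2} \lesssim |s|\,\|h\nabla(u_{\theta_{s,h}}-u_\theta)\|_{L^2} \lesssim |s|\,\|h\|_{C^1}\|u_{\theta_{s,h}}-u_\theta\|_{H^1}$, and a preliminary Lipschitz bound $\|u_{\theta_{s,h}}-u_\theta\|_{H^1} \lesssim |s|$ (obtained from the same kind of resolvent identity, $u_{\theta_{s,h}}-u_\theta = -s\,V_{\theta_{s,h}}[\nabla\cdot(h\nabla u_\theta)]$, plus the uniform $H^1$ mapping bound on $V_{\theta_{s,h}}$) shows $\|w_s\|_{L^2} = O(s^2) = o(s)$. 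Actually (\ref{limit}) is stated pointwise in $x$; for $d \le 3$ and $\beta > 1+d$ one has $H^2 \hookrightarrow C(\bar{\mathcal O})$, so an $H^2$ version of the same bound upgrades $L^2$ convergence to uniform (hence pointwise) convergence, which is why the restriction $d \le 3$ is imposed. Continuity of $\mathbb I_\theta : (H,\|\cdot\|_{\mathbb H}) \to L^2_\lambda$ — and its extension to all of $\mathbb H$ — needs the gain of derivatives from $V_\theta$: $\|\mathbb I_\theta h\|_{L^2} = \|V_\theta[\nabla\cdot(h\nabla u_\theta)]\|_{L^2} \lesssim \|h\nabla u_\theta\|_{H^{-1}} \lesssim \|h\nabla u_\theta\|_{L^2} \lesssim \|u_\theta\|_{C^1}\|h\|_{L^2} \lesssim \|h\|_{\mathbb H}$, where the key step is that $V_\theta$ maps $H^{-1}$ into $H^1 \hookrightarrow L^2$ boundedly (with norm controlled uniformly over $\Theta$ since the coefficient $\theta$ is uniformly elliptic and bounded), and that $\nabla\cdot$ maps $L^2$ into $H^{-1}$; boundedness on $L^2$ then allows the unique continuous extension to $\bar H = \mathbb H$ by density.

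Finally, for the domination hypothesis (\ref{domin}) I would again use the resolvent identity to get $\|u_{\theta+sh}-u_\theta\|_\infty \lesssim \|u_{\theta+sh}-u_\theta\|_{H^2} = |s|\,\|V_{\theta+sh}[\nabla\cdot(h\nabla u_\theta)]\|_{H^2} \lesssim |s|\,\|h\nabla u_\theta\|_{L^2}$, with the $H^2$ mapping norm of $V_{\theta+sh}$ bounded uniformly for $|s| < \epsilon$ (since $\theta + sh \in \Theta$ and elliptic $H^2$ estimates depend only on the $C^1$ norm of the coefficient and the ellipticity constant, both uniformly controlled); this gives (\ref{domin}) with $B = B(h,\theta) \sim \|h\|_{C^1}\|u_\theta\|_{C^1}$. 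The main obstacle is bookkeeping the elliptic regularity: one must check that all the constants appearing in the mapping bounds for $V_\theta$ (on $H^{-1} \to H^1$ and on $L^2 \to H^2$, and the continuous-embedding constants $H^2 \hookrightarrow C(\bar{\mathcal O})$ valid for $d \le 3$) are uniform over the whole neighbourhood $\Theta$, which is where the explicit bounds $\theta \ge 1/2$, $\theta_{|\partial\mathcal O}=1$ and $\|\theta-1\|_{H^\beta} < \eta$ with $\beta > 1+d$ enter, and to handle the boundary condition correctly so that $V_\theta$ is genuinely the Dirichlet inverse and the compactly supported perturbation $h$ does not disturb the boundary data of $u$.
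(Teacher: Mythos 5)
Your proposal is correct and its skeleton is the same as the paper's: derive perturbation (``resolvent''-type) identities from the PDE, then run uniform-over-$\Theta$ elliptic estimates for $V_\theta$ plus the Sobolev embedding $H^2(\mathcal O)\subset C(\bar{\mathcal O})$ for $d\le 3$ to get sup-norm control, which gives (\ref{limit}) (even with a quadratic remainder), (\ref{domin}), and the $L^2$-boundedness of $\mathbb I_\theta$ followed by extension by density. The differences are local and each works: you bound $\sup_{\theta\in\Theta}\|u_\theta\|_\infty$ by the maximum principle (or splitting off the harmonic extension of $g$), whereas the paper uses a Feynman--Kac representation with a uniform bound on $\sup_x\E^x\tau_{\mathcal O}$; your remainder $w_s$ solves an equation with the fixed operator $\mathcal L_\theta$ and right-hand side $-s\nabla\cdot(h\nabla(u_{\theta_{s,h}}-u_\theta))$, while the paper writes the remainder against $\mathcal L_{\theta+h}$ with right-hand side $-\nabla\cdot(h\nabla V_\theta[\nabla\cdot(h\nabla u_\theta)])$ --- algebraically equivalent bookkeeping; and for $\|\mathbb I_\theta h\|_{L^2}\lesssim\|h\|_{L^2}$ you invoke the Lax--Milgram $H^{-1}\to H^1_0$ bound for $V_\theta$ composed with $\nabla\cdot:L^2\to H^{-1}$, where the paper argues by duality, self-adjointness of $V_\theta$ and the divergence theorem; these are two phrasings of the same energy estimate (you should just say a word on why the $L^2$-Dirichlet inverse $V_\theta$ agrees with the Lax--Milgram inverse on $H^{-1}$). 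One small slip to fix in your verification of (\ref{domin}): the chain $\|V_{\theta+sh}[\nabla\cdot(h\nabla u_\theta)]\|_{H^2}\lesssim\|h\nabla u_\theta\|_{L^2}$ is not right as written --- the $L^2\to H^2$ elliptic estimate controls this by $\|\nabla\cdot(h\nabla u_\theta)\|_{L^2}=\|\nabla h\cdot\nabla u_\theta+h\Delta u_\theta\|_{L^2}\lesssim\|h\|_{C^1}\|u_\theta\|_{H^2}$ (which is also what your final expression for $B(h,\theta)$ presupposes, and matches the paper's bound); with that correction the argument is complete.
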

\begin{proof}
We can represent the solutions $u_\theta$ of (\ref{PDE}) by a Feynman-Kac type formula as
\begin{equation}
u_\theta(x) = \E^x g(X_{\tau_\mathcal O})  -\E^x\int_0^{\tau_\mathcal O} f(X_s)ds,~~x \in \mathcal O,
\end{equation}
where $(X_s:s \ge 0)$ is a Markov diffusion process started at $x \in \mathcal O$ with infinitesimal generator $\mathcal L_\theta/2$, law $\P^x=\P^x_\theta$, and exit time $\tau_\mathcal O$ from $\mathcal O$, see Theorem 2.1 on p.127 in \cite{F85}. As in the proof of Lemma 20 in \cite{NVW18} one bounds $\sup_{x \in \mathcal O}\E^x \tau_{\mathcal O}$ by a constant that depends only on $\mathcal O, \theta_{min}$, and we conclude from the last display that therefore
\begin{equation}\label{envel}
\|u_\theta\|_\infty \le \|g\|_\infty + \|f\|_\infty \sup_{x \in \mathcal O}\E^x \tau_\mathcal O <\infty
\end{equation}
so that the bound $U_\G$ for $\G$ required in Condition \ref{ass0} follows.

\smallskip

We will repeatedly use the following elliptic regularity estimates 
\begin{equation} \label{ellipt}
\|V_\theta [h]\|_{\infty} \leq c_0 \|V_\theta [h]\|_{H^2} \le c_1 \|h\|_{L^2},~~\|u_\theta\|_{H^2} \le c_2,
\end{equation}
with constants $c_0=c_0(\mathcal O), c_1=c_1(\theta_{min}, \mathcal O, \beta, \eta), c_2=c_2(U_\G, \|f\|_{L^2}, \|g\|_{H^2}, \theta_{min}, \mathcal O, \beta, \eta)$ that are \textit{uniform} in $\theta \in \Theta$. The first inequality in (\ref{ellipt}) is just the Sobolev imbedding. The second follows from Lemma 21 in \cite{NVW18}, noting also that $\sup_{\theta \in \Theta}\|\theta\|_{C^1} \le C(\beta, \eta, \mathcal O)$ by another Sobolev imbedding $H^\beta \subset C^1$. The final inequality in (\ref{ellipt}) follows from Theorem 8.12 in \cite{GT98} and (\ref{envel}).

\smallskip

To verify (\ref{domin}) notice that the difference $u_{\theta + sh}- u_\theta$ solves (\ref{PDE}) with $g=0$ and appropriate right hand side, specifically we can write 
\begin{equation}\label{diffp}
\G(\theta+sh) - \G(\theta) = -sV_\theta[\nabla \cdot (h \nabla u_{\theta+sh})],~~ h \in H,
\end{equation}
for $|s|$ small enough. Then (\ref{domin}) follows from (\ref{ellipt}) since
\begin{align*}
\|V_\theta[\nabla \cdot (h \nabla u_{\theta+sh})]\|_{\infty} &\lesssim \|\nabla \cdot (h \nabla u_{\theta+sh}))\|_{L^2}  \lesssim \|h \nabla u_{\theta+sh}\|_{H^1} \lesssim \|h\|_{C^1} \sup_{\theta \in \Theta}\|u_{\theta}\|_{H^2} \le B<\infty.
\end{align*}

We will verify (\ref{limit}) by establishing a stronger `$\|\cdot\|_\infty$-norm' differentiability result: fix $\theta \in \Theta$ and any $h \in H$ such that $\theta+h \in \Theta$. Denote by $D\G_\theta[h]$ the solution $v=v_h$ of the PDE 
\begin{align*}
\nabla \cdot (\theta \nabla v) &=-\nabla \cdot (h \nabla u_\theta) ~\text{on } \mathcal O, \\
v &=0 \text{ on } \partial \mathcal O
\end{align*}
where $u_\theta$ is the given solution of the original PDE (\ref{PDE}). Then the function $w_h = u_{\theta+h}-u_\theta - D\G_\theta[h]$ solves the PDE
\begin{align*}
\mathcal L_{\theta+h}w_h &= -\nabla \cdot (h  \nabla  V_{\theta}[\nabla \cdot (h \nabla u_\theta)) ~\text{on } \mathcal O, \\
w_h &=0 \text{ on } \partial \mathcal O.
\end{align*}
As a consequence, applying (\ref{ellipt}) and standard inequalities repeatedly we have
\begin{align} \label{linapprox}
\|u_{\theta+h}-u_\theta - D\G_\theta[h]\|_\infty &= \|V_{\theta+h}\big[\nabla \cdot (h  \nabla  V_{\theta}[\nabla \cdot (h \nabla u_\theta))\big]\|_{\infty} \notag \\
& \lesssim \left\|\nabla \cdot (h  \nabla  V_{\theta}[\nabla \cdot (h \nabla u_\theta))\right\|_{L^2} \notag \\
&\lesssim \|h\|_{C^1}  \|V_{\theta}[\nabla \cdot (h \nabla u_\theta))]\|_{H^2}  \notag \\
& \lesssim \|h\|_{C^1} \|\nabla \cdot (h \nabla u_\theta)\|_{L^2}\notag  \\
& \lesssim \|h\|_{C^1}^2 \|u_\theta\|_{H^2}= O(\|h\|^2_{C^1}). 
\end{align}
In particular $D\G_\theta[sh] = \mathbb I_\theta[sh]$ is the linearisation of the forward map $\theta \mapsto \G(\theta)=u_\theta$ along any path $\theta+sh, |s| >0, h \in H$.  Finally, by duality, self-adjointness of $V_\theta$ and the divergence theorem (Prop. 2.3 on p.143 in \cite{Tay11}) we can bound for every $h \in H$,
\begin{align*}
\|\mathbb I_\theta h \|_{L^2} &= \sup_{\|\phi\|_{L^2} \le 1} \Big|\int_\mathcal O \phi V_\theta[\nabla \cdot (h \nabla u_\theta)] \Big| = \sup_{\|\phi\|_{L^2} \le 1} \Big|\int_\mathcal O \nabla V_\theta [\phi]  \cdot h\nabla u_\theta] \Big| \\
&\lesssim \sup_{\|\phi\|_{L^2} \le 1} \|V_\theta [\phi]\|_{H^1} \|h\|_{L^2} \|u_\theta\|_{C^1} \lesssim \|h\|_{L^2}
\end{align*}
using also (\ref{ellipt}) and that $\|u_\theta\|_{C^1}<\infty$ (here for fixed $\theta$) as $u_\theta$ is smooth. By continuity and since $H$ is dense in $L^2_\lambda=\mathbb H$, we can extend $\mathbb I_\theta$ to a bounded linear operator on $\mathbb H$, completing the proof.
\end{proof}

Theorem \ref{dqmprop} gives the score operator $\mathbb A_\theta$ mapping $H$ into $L^2(\R \times \mathcal O, P_\theta)$ of the form
\begin{equation}
\mathbb A_\theta[h](x,y) = (y-u_\theta(x)) \times \mathbb I_\theta(h)(x),~~y \in \R,~ x \in \mathcal O.
\end{equation}
For the present tangent space $H$ we have $\bar H = \mathbb H$. To apply the general results from Section \ref{infog} we now calculate the adjoint $\mathbb I_\theta^*: L_\lambda^2(\mathcal O) \to \bar H= L_\lambda^2(\mathcal O)$ of $\mathbb I_\theta: \bar H \to L^2(\mathcal O)$. 

\begin{proposition}\label{adjform}
The adjoint $\mathbb I_\theta^*: L^2_\lambda(\mathcal O) \to L^2_\lambda(\mathcal O)$ of $\mathbb I_\theta$ is given by 
\begin{equation} \label{adjscop}
\mathbb I_\theta^*[g] = \nabla u_\theta \cdot \nabla V_\theta[g],~~ g \in L_\lambda^2(\mathcal O).
\end{equation}
\end{proposition}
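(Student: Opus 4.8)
The plan is to verify the single identity
\[
\langle \mathbb I_\theta h, g\rangle_{L^2} \;=\; \langle h,\ \nabla u_\theta \cdot \nabla V_\theta[g]\rangle_{L^2},\qquad h \in H = C^\infty_0(\mathcal O),\ g \in L^2_\lambda(\mathcal O),
\]
and then to extend it by density and continuity to all $h \in \bar H = L^2_\lambda(\mathcal O)$, which by uniqueness of the adjoint identifies $\mathbb I_\theta^*$. The computation is essentially the duality argument already used at the end of the proof of Theorem \ref{lanell}. First I would substitute the explicit formula $\mathbb I_\theta(h) = -V_\theta[\nabla\cdot(h\nabla u_\theta)]$ from \eqref{redscop}; since $h \in C^\infty_0(\mathcal O)$ and $u_\theta \in C^\infty(\mathcal O)$, the function $\nabla\cdot(h\nabla u_\theta)$ lies in $C^\infty_0(\mathcal O) \subset L^2_\lambda(\mathcal O)$, so the self-adjointness of $V_\theta$ on $L^2_\lambda(\mathcal O)$ (recorded after \eqref{invlap}) applies and gives
\[
\langle \mathbb I_\theta h, g\rangle_{L^2} \;=\; -\int_{\mathcal O} V_\theta\big[\nabla\cdot(h\nabla u_\theta)\big]\, g \;=\; -\int_{\mathcal O} \nabla\cdot(h\nabla u_\theta)\, V_\theta[g].
\]

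Next I would integrate by parts via the divergence theorem (Prop.~2.3 on p.143 in \cite{Tay11}): because $h$ has compact support in $\mathcal O$ no boundary terms appear, and $u_\theta \in C^\infty(\mathcal O)$ together with $V_\theta[g] \in H^2(\mathcal O)$ (by \eqref{invlap}) makes every integrand legitimate, so
\[
-\int_{\mathcal O} \nabla\cdot(h\nabla u_\theta)\, V_\theta[g] \;=\; \int_{\mathcal O} (h\nabla u_\theta)\cdot \nabla V_\theta[g] \;=\; \langle h,\ \nabla u_\theta \cdot \nabla V_\theta[g]\rangle_{L^2},
\]
which is the desired identity. It then remains to observe that $g \mapsto \nabla u_\theta \cdot \nabla V_\theta[g]$ is a bounded operator on $L^2_\lambda(\mathcal O)$: indeed $\|\nabla u_\theta \cdot \nabla V_\theta[g]\|_{L^2} \le \|u_\theta\|_{C^1}\,\|V_\theta[g]\|_{H^1} \lesssim \|g\|_{L^2}$ using the elliptic estimate \eqref{ellipt} and smoothness of $u_\theta$. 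Since $\mathbb I_\theta$ is bounded on $\mathbb H$ (Theorem \ref{lanell}) and $H$ is dense in $\bar H = L^2_\lambda(\mathcal O)$, both sides of the identity are continuous in $h \in \bar H$, so the identity persists on all of $\bar H$, giving $\mathbb I_\theta^*[g] = \nabla u_\theta \cdot \nabla V_\theta[g]$.

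I do not anticipate a serious obstacle: the argument is a short computation, and its only mildly delicate points are the legitimacy of the integration by parts — secured by the compact support of $h$ and the $H^2$-regularity of $V_\theta[g]$ — and the passage from $H$ to its closure. If one prefers to avoid even that bookkeeping, one can first run the computation for $g \in C^\infty_0(\mathcal O)$ and then extend in $g$ by a second density argument using the $L^2$-boundedness just noted.
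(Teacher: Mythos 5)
Your proposal is correct and follows essentially the same route as the paper: substitute the formula \eqref{redscop}, use self-adjointness of $V_\theta$, integrate by parts via the divergence theorem with the compactly supported field $h\nabla u_\theta$ (the paper equivalently invokes $[V_\theta g]_{|\partial\mathcal O}=0$), and conclude by density and $L^2$-boundedness. The only cosmetic difference is that you extend in $h$ after treating general $g\in L^2_\lambda$ directly, whereas the paper first proves the identity for $h,g\in C^\infty_0$ and then extends in $g$; both closings of the argument are equally valid.
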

\begin{proof}
Since $\mathbb I_\theta$ from (\ref{redscop}) defines a bounded linear operator on the Hilbert space $L^2_\lambda=\mathbb H$, a unique adjoint operator $I^*_\theta$ exists by the Riesz-representation theorem. Let us first show that 
\begin{equation}\label{trivid}
\langle h, (I^*_\theta-\mathbb I_\theta^*)g \rangle_{L^2}=0,~~\forall h, g \in C^\infty_0(\mathcal O).
\end{equation}
Indeed, since $V_\theta$ is self-adjoint for $L^2_\lambda$ and satisfies $[V_\theta g]_{|\partial \mathcal O} = 0$, we can apply the divergence theorem (Prop.~2.3 on p.143 in \cite{Tay11}) with vector field $X=h \nabla u_\theta$ to deduce
\begin{align*}
\langle h, I^*_\theta g \rangle_{L^2(\mathcal O)}&=\langle \mathbb I_\theta h, g \rangle_{L^2(\mathcal O)}=-\langle V_\theta  [\nabla \cdot (h \nabla u_\theta)] , g \rangle_{L^2(\mathcal O)} \\
& = -\int_\mathcal O  [\nabla \cdot (h \nabla u_\theta)] V_\theta [g] d\lambda  \\
& = \int_\mathcal O h \nabla u_\theta \cdot \nabla V_\theta[g] d\lambda = \langle h, \mathbb I^*_\theta g \rangle_{L^2(\mathcal O)},
\end{align*}
so that (\ref{trivid}) follows. Since $C^\infty_0(\mathcal O)$ is dense in $L^2_\lambda(\mathcal O)$ and since $I_\theta^*, \mathbb I_\theta^*$ are continuous on $L^2_\lambda(\mathcal O)$ (by construction in the former case and by (\ref{ellipt}), $u_\theta \in C^\infty(\mathcal O)$, in the latter case), the identity (\ref{trivid}) extends to all $g \in L^2_\lambda(\mathcal O)$ and hence $I^*_\theta = \mathbb I_\theta^*$, as desired.
\end{proof}

Note further that for $\theta \in \Theta$ fixed, using (\ref{ellipt}), $u_\theta \in C^\infty$ and $L^2$-continuity of $\mathbb I_\theta$, we have $\|\mathbb I_\theta^* \mathbb I_\theta h\|_{H^1} \lesssim \|\mathbb I_\theta h\|_{L^2} \lesssim \|h\|_{L^2}$.  The compactness of the embedding $H^1 \subset L^2$ now implies that the information operator $\mathbb I_\theta^* \mathbb I_\theta$ is a compact and self-adjoint operator on $L^2(\mathcal O)$.

\subsection{Injectivity of $\mathbb I_\theta$, $\mathbb I_\theta^* \mathbb I_\theta$}

Following the developments in Section \ref{infog}, our ultimate goal is to understand the range $R(\mathbb I_\theta^*)$ of the adjoint operator $\mathbb I_\theta^*$. A standard Hilbert space duality argument implies that 
\begin{equation}\label{kerran}
R(\mathbb I_\theta^*)^{\perp}=\text{ker}(\mathbb I_\theta),
\end{equation}
that is, the ortho-complement (in $\mathbb H$) of the range of $\mathbb I_\theta^*$ equals the kernel (null space) of $\mathbb I_\theta$ (in $\mathbb H$). Thus if $\psi$ is in the kernel of $\mathbb I_\theta$ then it cannot lie in the range of the adjoint and the non-existence of the inverse Fisher information in Theorem \ref{aad} for such $\psi$ can be attributed simply to the lack of injectivity of $\mathbb I_\theta$. 

We first show that under the natural `global identification' condition (\ref{lowbd}), the mapping $\mathbb I_\theta$ from (\ref{redscop}) is injective on the tangent space $H$ (and hence on our parameter space $\Theta$). The proof (which is postponed to Section \ref{injprf}) also implies injectivity of the information operator $\mathbb I_\theta^* \mathbb I_\theta$ on $H$, and in fact gives a $H^2-L^2$ Lipschitz stability estimate for $\mathbb I_\theta$.

\begin{theorem}\label{inject}
In the setting of Theorem \ref{lanell}, suppose also that (\ref{lowbd}) holds true. Then for $\mathbb I_\theta$ from (\ref{redscop}), every $\theta \in \Theta$ and some $c=c(\mu, c_0, \theta, \mathcal O)$
\begin{equation}\label{gradstab}
\|\mathbb I_\theta[h]\|_{H^2} \ge c \|h\|_{L^2} ~~\forall h \in H.
\end{equation}
In particular $\mathbb I_\theta(h)=0$ or $\mathbb I_\theta^* \mathbb I_\theta (h)=0$ imply $h=0$ for all $h\in H$.
\end{theorem}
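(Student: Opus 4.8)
The plan is to prove (\ref{gradstab}) as a linearised analogue of Proposition \ref{oldboy}, by the Richter--Alessandrini weighted energy method; the two injectivity assertions then follow formally. Fix $\theta\in\Theta$ and $h\in H=C_0^\infty(\mathcal O)$, and set $v=\mathbb I_\theta[h]$, which by (\ref{redscop}) solves the Dirichlet problem $\mathcal L_\theta v=-\nabla\cdot(h\nabla u_\theta)$ in $\mathcal O$ with $v=0$ on $\partial\mathcal O$. Since $\theta,u_\theta\in C^\infty(\mathcal O)$ and $h$ is smooth with compact support, interior elliptic regularity gives $v\in C^\infty(\mathcal O)$, while (\ref{ellipt}) gives $v\in H^2(\mathcal O)$ globally. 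Expanding the two divergences pointwise, the defining equation becomes a first-order linear transport equation for the unknown $h$,
\[
\nabla u_\theta\cdot\nabla h+(\Delta u_\theta)\,h=-\big(\theta\,\Delta v+\nabla\theta\cdot\nabla v\big)=:F_\theta[v],
\]
whose right-hand side satisfies $\|F_\theta[v]\|_{L^2}\le(\|\theta\|_\infty+\|\nabla\theta\|_\infty)\|v\|_{H^2}\lesssim\|v\|_{H^2}$, with a constant uniform over $\theta\in\Theta$ since $\sup_{\theta\in\Theta}\|\theta\|_{C^1}<\infty$ by the embedding $H^\beta\subset C^1$.

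Next I would run the weighted $L^2$-estimate that turns hypothesis (\ref{lowbd}) into coercivity. Multiply the transport equation by $h\,e^{-\mu u_\theta}$ and integrate over $\mathcal O$. Since $h$ has compact support, the divergence theorem applied to the scalar $h^2$ produces no boundary term, so
\begin{align*}
\int_{\mathcal O}h\,e^{-\mu u_\theta}\,\nabla u_\theta\cdot\nabla h
&=\tfrac12\int_{\mathcal O}e^{-\mu u_\theta}\,\nabla u_\theta\cdot\nabla(h^2)
=-\tfrac12\int_{\mathcal O}\nabla\cdot\big(e^{-\mu u_\theta}\nabla u_\theta\big)\,h^2\\
&=-\tfrac12\int_{\mathcal O}e^{-\mu u_\theta}\big(\Delta u_\theta-\mu|\nabla u_\theta|^2\big)h^2.
\end{align*}
Substituting this into the integrated transport equation, the $\Delta u_\theta$-terms combine and one obtains the key identity
\[
\tfrac12\int_{\mathcal O}e^{-\mu u_\theta}\big(\Delta u_\theta+\mu|\nabla u_\theta|^2\big)h^2=\int_{\mathcal O}F_\theta[v]\,h\,e^{-\mu u_\theta}.
\]
By (\ref{lowbd}) the left-hand integrand is $\ge c_0\,e^{-\mu\|u_\theta\|_\infty}h^2$, whereas Cauchy--Schwarz bounds the right-hand side by $e^{\mu\|u_\theta\|_\infty}\|F_\theta[v]\|_{L^2}\|h\|_{L^2}$. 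Cancelling one factor $\|h\|_{L^2}$ and invoking the bound on $\|F_\theta[v]\|_{L^2}$ above yields $\|h\|_{L^2}\le c^{-1}\|v\|_{H^2}=c^{-1}\|\mathbb I_\theta[h]\|_{H^2}$ for a constant $c=c(\mu,c_0,\theta,\mathcal O)>0$ (depending on $\theta$ through $\|u_\theta\|_\infty$ and $\|\theta\|_{C^1}$), which is (\ref{gradstab}).

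The two injectivity claims are then immediate: (\ref{gradstab}) forces $h=0$ whenever $\mathbb I_\theta h=0$; and if $\mathbb I_\theta^*\mathbb I_\theta h=0$ then $0=\langle\mathbb I_\theta^*\mathbb I_\theta h,h\rangle_{\mathbb H}=\|\mathbb I_\theta h\|_{L^2}^2$, so $\mathbb I_\theta h=0$ and again $h=0$. I expect the only genuinely non-routine step to be the choice of the logarithmic integrating factor $e^{-\mu u_\theta}$: this is exactly what converts the sign-indefinite coefficient $\Delta u_\theta$ left over after the first integration by parts into the combination $\Delta u_\theta+\mu|\nabla u_\theta|^2$ that (\ref{lowbd}) bounds below. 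The remaining ingredients --- interior smoothness of $v$, vanishing of boundary terms via the compact support of $h$, and uniformity of constants over $\Theta$ --- are routine given Theorem \ref{lanell} and the elliptic estimates (\ref{ellipt}).
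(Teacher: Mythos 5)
Your proposal is correct and takes essentially the same route as the paper: your weighted identity with the integrating factor $e^{-\mu u_\theta}$, combined with (\ref{lowbd}) and Cauchy--Schwarz, is exactly the paper's Lemma \ref{coerc}, and the injectivity deductions are identical. The only cosmetic difference is that you bound $\|\nabla\cdot(h\nabla u_\theta)\|_{L^2}=\|\mathcal L_\theta\,\mathbb I_\theta[h]\|_{L^2}\lesssim\|\mathbb I_\theta[h]\|_{H^2}$ directly from continuity of $\mathcal L_\theta:H^2\to L^2$, whereas the paper phrases the same estimate through the isomorphism property of $u\mapsto(\mathcal L_\theta u,u_{|\partial\mathcal O})$ applied to $V_\theta$.
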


\smallskip

Using (\ref{ellipt}) one shows further that the operator $\mathbb I_\theta$ is continuous from $H^1(\mathcal O) \to H^2(\mathcal O)$ and by taking limits in (\ref{gradstab}), Theorem \ref{inject} then extends to all $h \in H^1_0(\mathcal O)$ obtained as the completion of $H$ for the $H^1(\mathcal O)$-Sobolev norm. 

Of course, the kernel in (\ref{kerran}) is calculated on the Hilbert space $\mathbb H=L^2(\mathcal O)$, so the previous theorem does not characterise $R(\mathbb I_\theta^*)^{\perp}$ yet. Whether $\mathbb I_\theta$ is injective on all of $L^2(\mathcal O)$ depends on finer details of the PDE (\ref{PDE}). Let us illustrate this in the model examples from above.

\subsubsection{Example \ref{ex1} continued; on the kernel in $L^2(\mathcal O)$.} 

\smallskip

In our first example, $\mathbb I_\theta$ starts to have a kernel already when $h_{|\partial \mathcal O}\neq 0$. Indeed, from the proof of Theorem \ref{inject}, a function $h \in C^\infty(\bar {\mathcal O})$ is in the kernel of $\mathbb I_\theta$ if and only if 
\begin{equation}\label{tinj}
T_\theta(h)=\nabla \cdot (h \nabla u_\theta) = \nabla h \cdot \nabla u_\theta + h \Delta u_\theta=0.
\end{equation}
Now fix any $\theta \in \Theta$ with $u_\theta$ satisfying (\ref{gradlb}). The integral curves $\gamma(t)$ in $\mathcal O$ associated to the smooth vector field $\nabla u_\theta \neq 0$ are given near $x \in \mathcal O$ as the unique solutions (e.g., \cite{Tay11}, p.9) of the vector ODE 
\begin{equation} \label{orbit}
\frac{d \gamma}{dt} = \nabla u_\theta (\gamma),~~\gamma(0)=x.
\end{equation}
Since $\nabla u_\theta$ does not vanish we obtain through each $x \in \mathcal O$ a unique curve $(\gamma(t): 0 \le t \le T_\gamma)$ originating and terminating at the boundary $\partial \mathcal O$, with finite `travel time' $T_\gamma \le T(\mathcal O, c_\nabla)<\infty$. Along this curve, (\ref{tinj}) becomes the ODE
$$\frac{d}{dt}h(\gamma(t)) + h(\gamma(t)) \Delta u_\theta(\gamma(t)) =0,~~0 < t <T_\gamma.$$
Under the constraint $h|_{\partial \mathcal O}=0$ for $h \in H$, the unique solution of this ODE is $h=0$, which is in line with Theorem \ref{inject}. But for other boundary values of $h$, non-zero solutions exist. One can characterise the elements $h \in C^\infty(\bar {\mathcal O})$ in the kernel of $\mathbb I_{\theta}$ as follows. Since the vector field $\nabla u_{\theta}$ is non-trapping there exists (\cite[Theorem 6.4.1]{DH_72}) $r\in C^{\infty}(\bar{\mathcal O})$ such that $\nabla u_{\theta} \cdot \nabla r=\Delta u_{\theta}$.
Thus
\[\nabla u_{\theta} \cdot \nabla (he^{r})=e^{r}T_{\theta}(h)\]
and it follows that $T_{\theta}(h)=0$ iff $he^{r}$ is a first integral of $\nabla u_{\theta}$. Observe that the set of first integrals of $\nabla u_{\theta}$ is rather large: using the flow of $\nabla u_{\theta}$ we can pick coordinates
$(x_{1},\dots,x_{d})$ in $\mathcal O$ such that $t\mapsto (t+x_{1},x_{2},\dots,x_{d})$ are the integral curves of $\nabla u_{\theta}$ and thus any function that depends only on $x_{2},\dots,x_{d}$ is a first integral.

\smallskip

\subsubsection{Example \ref{ex2} continued; injectivity on $L^2(\mathcal O)$.} 

\smallskip

We now show that in the context of Example \ref{ex2}, the injectivity part of Theorem \ref{inject} \textit{does} extend to all of $L^2(\mathcal O)$. 

\begin{proposition}\label{dynam}
Let $\mathbb I_\theta$ be as in (\ref{redscop}) where $u_\theta$ solves (\ref{PDE}) with $f, g$ as in (\ref{repex}) and $\mathcal O$ is the unit disk in $\R^2$ centred at $(0,0)$. Then for $\theta =1$, the map $\mathbb I_1 : L^2(\mathcal O) \to L^2(\mathcal O)$ is injective.
\end{proposition}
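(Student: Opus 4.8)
Fix the base point $\theta=1$, so that $u_1(x)=(|x|^2-1)/2$ on the unit disk $\mathcal O$, with $\nabla u_1=x$, $\Delta u_1=2$, and $V_1=V_{\Delta}$ the Dirichlet Green operator for the Laplacian. Suppose $h\in L^2(\mathcal O)$ satisfies $\mathbb I_1[h]=0$, i.e. $V_1[\nabla\cdot(h\nabla u_1)]=0$. Since $V_1$ is injective on $L^2$, this forces $\nabla\cdot(h\,x)=0$ in the sense of distributions on $\mathcal O$. The plan is to show that this transport-type equation has no nonzero $L^2$ solution because the vector field $x$ has a single hyperbolic source at the origin, around which any distributional solution is forced to blow up faster than $L^2$ allows.

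First I would rewrite the equation $\nabla\cdot(h\,x)=0$ in polar coordinates $(\rho,\varphi)$. Since $x=\rho\,\partial_\rho$ (as a vector field) and $\nabla\cdot(\rho\,\partial_\rho)=2$, the equation becomes, away from the origin, $\rho\,\partial_\rho h + 2h=0$, i.e. $\partial_\rho(\rho^2 h)=0$, so on each ray $h(\rho,\varphi)=c(\varphi)/\rho^2$ for some function $c$ of the angle only. The second step is the integrability check: $\int_{\mathcal O}|h|^2\,dx=\int_0^{2\pi}\int_0^1 |c(\varphi)|^2\rho^{-4}\,\rho\,d\rho\,d\varphi=\big(\int_0^{2\pi}|c(\varphi)|^2d\varphi\big)\int_0^1\rho^{-3}d\rho$, and $\int_0^1\rho^{-3}d\rho=\infty$ unless $c\equiv 0$ a.e. Hence $h=0$ a.e. on $\mathcal O$, giving injectivity on $L^2(\mathcal O)$. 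To make this rigorous I would either (i) work on annuli $\{\epsilon<|x|<1\}$, solve the ODE classically there, let $\epsilon\downarrow 0$, and use the a priori $L^2$ bound to kill the blow-up; or (ii) test the distributional equation $\nabla\cdot(hx)=0$ against functions of the form $x\mapsto \rho^{-2}\chi(\rho)\omega(\varphi)$ with $\chi$ supported in $(\epsilon,1)$ and let $\epsilon\to0$, using Cauchy--Schwarz to see that $\int |c(\varphi)|^2\,d\varphi$ must vanish. Care is needed that $h\nabla u_1\in L^2$ (it is, since $|\nabla u_1|\le 1$ on $\mathcal O$) so that $\nabla\cdot(h\nabla u_1)$ is a well-defined element of $H^{-1}$ and the identity $\mathbb I_1[h]=-V_1[\nabla\cdot(h\nabla u_1)]$ extends by the $L^2$-continuity of $\mathbb I_1$ established in Theorem \ref{lanell}.

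The main obstacle is the low-regularity bookkeeping near the critical point: the formal computation is a one-line ODE, but $h$ is only assumed $L^2$ and the equation holds only distributionally, so one must justify the polar-coordinate reduction and the passage to the limit $\epsilon\to0$ without assuming more smoothness than is given. A clean way to organize this is to introduce the change of variables $h(x)=|x|^{-2}\tilde h(x)$ heuristically and instead argue directly: mollify $h$, note that $h_\delta$ need not solve the equation, so better to use the structure $\partial_\rho(\rho^2 h)=0$ read as: for every test function $\omega=\omega(\varphi)$ on the circle, the distribution $\rho\mapsto \int \rho^2 h(\rho,\varphi)\omega(\varphi)\,d\varphi$ has vanishing $\rho$-derivative on $(0,1)$, hence is constant $=:C_\omega$; then $\int_0^1\!\int_0^{2\pi} |h|^2\rho\,d\varphi\,d\rho\ge \int_0^1 \rho^{-3}\frac{|C_\omega|^2}{\|\omega\|_{L^2}^2}\,d\rho$ by Cauchy--Schwarz in $\varphi$, which is finite only if $C_\omega=0$ for all $\omega$, whence $h=0$. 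I would also remark (for the record, anticipating the later discussion of the kernel in $L^2$) that the contrast with Example \ref{ex1} is exactly that there $u_\theta$ has no interior critical point, so the vector field $\nabla u_\theta$ is non-vanishing and the transport equation admits an abundance of $L^2$ first integrals, whereas here the unique interior minimum of $u_1$ creates a source that obstructs all of them.
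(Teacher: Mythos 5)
Your proposal is correct, and its core coincides with the paper's own argument: both reduce the problem to the distributional transport equation $\nabla\cdot(h\,\nabla u_1)=\nabla\cdot(h\,x)=0$ on $\mathcal O$ and then kill all solutions by the same polar-coordinate device — testing against products $\phi(\rho)\omega(\varphi)$ shows that $\rho^2\int_0^{2\pi}h(\rho,\varphi)\omega(\varphi)\,d\varphi$ has vanishing distributional $\rho$-derivative, hence is a constant $C_\omega$, and the resulting $\rho^{-2}$ profile is incompatible with square-integrability near the origin unless $C_\omega=0$ (you close this with Cauchy--Schwarz against $\|h\|_{L^2}$, the paper with finiteness of the pairing $(f,\psi)_{L^2}$; these are interchangeable). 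Where you differ is the entry point: the paper never touches the forward formula for $\mathbb I_1$ on non-smooth $h$; it dualises, uses the adjoint formula $\mathbb I_1^*h=\nabla u_1\cdot\nabla V_1[h]$ from Proposition \ref{adjform} only on smooth $h$, and picks $h=\Delta g$ with $g\in C_0^\infty$ so that $V_1[\Delta g]=g$, landing directly on $\int_{\mathcal O}X(g)f\,d\lambda=0$. You instead extend the identity $\mathbb I_1[h]=-V_1[\nabla\cdot(h\nabla u_1)]$ to all $h\in L^2$ and invoke injectivity of $V_1$; this works, but note the injectivity you need is that of the extension $V_1:H^{-1}(\mathcal O)\to H^1_0(\mathcal O)$ (an isomorphism by Lax--Milgram), not ``injectivity on $L^2$'' as you phrase it, and you must also check that the $L^2$-continuous extension of $\mathbb I_1$ agrees with $-V_1[\nabla\cdot(\cdot\,\nabla u_1)]$ in this $H^{-1}$ sense — which follows since both are $L^2$-continuous and agree on the dense subspace $C_0^\infty$, a verification you correctly flag. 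The paper's adjoint route buys you freedom from this extension bookkeeping; your route is more direct and makes the transport equation for $h$ itself explicit, which matches well the heuristic picture (backward flow of $x$ has a sink at the origin) that the paper also sketches.
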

\begin{proof}
Let us write $I=\mathbb I_1$ and suppose $I(f)=0$ for $f \in L^2(\mathcal O)$. Then for any $h \in C^\infty(\mathcal O)$ we have by Proposition \ref{adjform}
\begin{equation}
0=\langle I f, h \rangle_{L^2(\mathcal O)} = \langle f, I^* h \rangle_{L^2(\mathcal O)} = \langle f, X V_1 [h] \rangle_{L^2(\mathcal O)}
\end{equation}
with vector field $X=\nabla u_1 \cdot \nabla (\cdot)= x_1 \partial x_1 + x_2 \partial x_2$, $(x_1,x_2) \in \mathcal O$. Choosing $h=\Delta g$ for any smooth $g$ of compact support we deduce that 
\begin{equation}\label{eq:i=0}
\int_\mathcal O X(g) f d\lambda=0,~~ \forall g \in C^\infty_0(\mathcal O),
\end{equation}
and we now show that this implies $f=0$. A somewhat informal dynamical argument would say that \eqref{eq:i=0} asserts that $fd\lambda$ is an invariant density under the flow of $X$. Since the flow of $X$ in backward time has a sink at the origin, the density can only be supported at $(x_1,x_2)=0$ and thus $f=0$.

One can give a distributional argument as follows. Suppose we consider polar coordinates $(r,\vartheta)\in (0,1)\times S^{1}$ and functions $g$ of the form $\phi(r)\psi(\vartheta)$, where $\phi\in C_{0}^{\infty}(0,1)$ and $\psi\in C^{\infty}(S^{1})$. In polar coordinates $X=r\partial_{r}$ and hence we may write \eqref{eq:i=0} as
\begin{equation}
\int_{0}^{1}\left(r^{2}\left(\int_{0}^{2\pi}f(r,\vartheta)\psi(\vartheta)\,d\vartheta\right)\partial_{r}\phi\right)\,dr=0.
\label{eq:i=02}
\end{equation}
By Fubini's theorem, for each $\psi$ we have an integrable function
\[F_{\psi}(r):=\int_{0}^{2\pi}f(r,\vartheta)\psi(\vartheta)\,d\vartheta\]
and thus $r^{2}F_{\psi}$ defines an integrable function on $(0,1)$ whose distributional derivative satisfies
$\partial_{r}(r^{2}F_{\psi})=0$ by virtue of \eqref{eq:i=02}. Thus $r^{2}F_{\psi}=c_{\psi}$ (using that a distribution on $(0,1)$ with zero derivative must be a constant).
Now consider $\psi\in C^{\infty}(S^{1})$ also as a function in $L^{2}(\mathcal O)$ and compute the pairing
\[(f,\psi)_{L^{2}(\mathcal O)}=\int_{0}^{1}rF_{\psi}(r)\,dr=c_{\psi}\int_{0}^{1}r^{-1}\,dr=\pm\infty\]
unless $c_{\psi}=0$. Thus $f=0$.

\end{proof}

By perturbation (similar as in (\ref{pertubio})) and the Morse lemma, we can show that $u_\theta, \theta \in \Theta,$ has a gradient $u_\theta$ that vanishes only at a single point in a neighbourhood of $0$, and so the proof of the previous theorem extends to any $\theta \in \Theta$.

 \subsection{The range of $\mathbb I_\theta^*$ and transport PDEs} \label{surj}
 
From (\ref{kerran}) we see $\overline{R(\mathbb I_\theta^*)}= \text{ker}(\mathbb I_\theta)^{\perp},$ but in our infinite-dimensional setting care needs to be exercised as the last identity holds in the (complete) Hilbert space $\mathbb H = L^2(\mathcal O)$ rather than in our tangent space $H$ (on which the kernel of $\mathbb I_\theta$ is trivial). We will now show that the range $R(\mathbb I_\theta^*)$ remains strongly constrained. This is also true in Example \ref{ex2} when $\text{ker}(\mathbb I_\theta)=\{0\}$: the range may not be closed $\overline{R(\mathbb I_\theta^*)}\neq R(\mathbb I_\theta^*)$, and this `gap' can be essential in the context of Theorems \ref{aad} and \ref{n00}. To understand this, note that from Proposition \ref{adjform} we have
\begin{equation} \label{range}
R(\mathbb I_\theta^*)=\big\{\psi =  \nabla u_\theta \cdot \nabla  V_\theta [g],~\text{for some } g \in L^2_\lambda(\mathcal O)\big\},
\end{equation}
The operator $V_\theta$ maps $L^2_\lambda$ into $H^2_0=\{y \in H^2: y_{|\partial \mathcal O}=0\}$ and hence if $\psi$ is in the range of $\mathbb I_\theta^*$ then the equation
\begin{align}\label{transport}
\nabla u_\theta \cdot \nabla y = \psi~~\text{on } \mathcal O \\
y=0 \text{ on } \partial \mathcal O \notag
\end{align}
necessarily has a solution $y=y_\psi \in H^2_0$. The existence of solutions to the transport PDE (\ref{transport})  depends crucially on the compatibility of $\psi$ with geometric properties of the vector field $\nabla u_\theta$, which in turn is determined by the geometry of the forward map $\G$ (via $f,g,\theta$) in the base PDE (\ref{PDE}). We now illustrate this in our two model Examples \ref{ex1} and \ref{ex2}.

\smallskip

\subsubsection{Example \ref{ex1} continued; range constraint.} 

\smallskip

Applying the chain rule to $y \in H^2(\mathcal O)$ and using (\ref{orbit}) we see
$$ \frac{d}{dt} y(\gamma(t)) = \frac{d \gamma (t)}{dt} \cdot \nabla y(\gamma(t)) = (\nabla u_\theta  \cdot \nabla y)(\gamma(t)),~~0 < t <T_\gamma.$$
Hence along any integral curve $\gamma$ of the vector field $\nabla u_\theta$, the PDE (\ref{transport}) reduces to the ODE
\begin{equation}\label{oder}
\frac{dy}{dt} = \psi.
\end{equation}
Now suppose $\psi \in R(\mathbb I_\theta^*)$ then a solution $y \in H^2_0$ to (\ref{transport}) satisfying $y_{|\partial \mathcal O}=0$ must exist. Such $y$ then also solves the ODE (\ref{oder}) along each curve $\gamma$, with initial and terminal values $y(0)=y(T_\gamma)=0$. By the fundamental theorem of calculus (and uniqueness of solutions) this forces 
\begin{equation}\label{ftc}
\int_0^{T_\gamma} \psi(\gamma(t))dt  =0
\end{equation}
to vanish. In other words, $\psi$ permits a solution $y$ to (\ref{transport}) only if $\psi$ integrates to zero along each integral curve (orbit) induced by the vector field $\nabla u_\theta$. Now consider any smooth (non-zero) \textit{nonnegative} $\psi$ in the tangent space $H=C^\infty_0(\mathcal O)$, and take $x \in \mathcal O$ such that $\psi \ge c>0$ near $x$. For $\gamma$ the integral curve passing through $x$ we then cannot have (\ref{ftc}) as the integrand never takes negative values while it is positive and continuous near $x$. Conclude by way of contradiction that $\psi \notin R(\mathbb I_\theta^*)$.  Applying Theorems \ref{aad} and \ref{n00}, we have proved: 
\begin{theorem}\label{nogood}
Consider estimation of the functional $\Psi(\theta)=\langle \theta, \psi \rangle_{L^2(\mathcal O)}$ from data $(Y_i, X_i)_{i=1}^N$ drawn i.i.d.~from $P_\theta^N$ in the model (\ref{modelp}) where $f,g$ in (\ref{PDE}) are chosen as in (\ref{repex}), the domain $\mathcal O$ is separated away from the origin, and $\Theta$ is as in (\ref{bigtheta}) with $\eta$ small enough and $\beta>1+d, d \le 3$. Suppose $0 \neq \psi \in C^\infty_0(\mathcal O)$ satisfies $ \psi \ge 0$ on $\mathcal O$. Then for every $\theta \in \Theta$ the efficient Fisher information for estimating $\Psi(\theta)$ satisfies
\begin{equation}\label{nofish}
\inf_{h \in H, \langle h, \psi \rangle_{L^2}\neq 0}\frac{\|\mathbb I_\theta h\|_{L^2_\lambda}^2}{\langle \psi, h \rangle_{L^2_\lambda}^2} =0.
\end{equation}
In particular, for any $\theta \in \Theta$,
\begin{equation}\label{noway}
\liminf_{N \to \infty} \inf_{\tilde \psi_N: (\R  \times \mathcal O)^N \to \R} \sup_{\theta' \in \Theta, \|\theta'-\theta\|_{\mathbb H} \le 1/\sqrt N} N E_{\theta'}^N(\tilde \psi_N - \Psi(\theta'))^2 = \infty.
\end{equation}
\end{theorem}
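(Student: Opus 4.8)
The plan is to reduce everything to the two abstract results already proved, namely Theorem~\ref{aad} and Theorem~\ref{n00}, by establishing the single geometric fact that a nonnegative nonzero $\psi \in C^\infty_0(\mathcal O)$ cannot lie in $R(\mathbb I_\theta^*)$. First I would record, via Proposition~\ref{adjform} and the identification \eqref{range}, that membership $\psi \in R(\mathbb I_\theta^*)$ forces the transport equation \eqref{transport} to admit a solution $y = y_\psi \in H^2_0$ (since $V_\theta$ maps $L^2_\lambda$ into $H^2_0$ and $\nabla u_\theta \cdot \nabla y_\psi = \psi$). The core of the argument is then purely one-dimensional: along any integral curve $\gamma$ of the nonvanishing smooth vector field $\nabla u_\theta$ (which exists and has finite travel time $T_\gamma$ by \eqref{gradlb} and \eqref{orbit}), the chain rule turns \eqref{transport} into the ODE $\tfrac{d}{dt} y(\gamma(t)) = \psi(\gamma(t))$; integrating from the entry point to the exit point on $\partial\mathcal O$, where $y$ vanishes, yields the compatibility condition $\int_0^{T_\gamma} \psi(\gamma(t))\,dt = 0$ for \emph{every} orbit.

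Next I would derive the contradiction. Pick $x \in \mathcal O$ with $\psi(x) > 0$; by continuity $\psi \ge c > 0$ on a neighbourhood of $x$, so along the orbit $\gamma$ through $x$ the integrand $t \mapsto \psi(\gamma(t))$ is nonnegative everywhere and strictly positive on a subinterval of positive length, making $\int_0^{T_\gamma} \psi(\gamma(t))\,dt > 0$. This contradicts the compatibility condition, so $\psi \notin R(\mathbb I_\theta^*)$. Since by \eqref{range0} we have $R(\mathbb I_\theta^*) = R(\mathbb A_\theta^*)$, Theorem~\ref{aad} immediately gives that the efficient Fisher information vanishes, $i_{\theta,H,\psi} = 0$, which is precisely \eqref{nofish}. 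Feeding $i_{\theta,H,\psi}=0$ into Theorem~\ref{n00} then yields the local asymptotic minimax statement \eqref{locmininf}, and since the supremum over $\|\theta'-\theta\|_{\mathbb H} \le 1/\sqrt N$ in \eqref{noway} is the same as the supremum over $h \in H$ with $\|h\|_{\mathbb H} \le 1/\sqrt N$ (paths $\theta + h/\sqrt N$ remain in $\Theta$ for $\eta$ small and $N$ large, by construction of $\Theta$ and $H$ in \eqref{bigtheta}, \eqref{tangentspace}), this is exactly \eqref{noway}.

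Two technical points deserve care but should not be serious obstacles. First, one must confirm that Condition~\ref{ass0} genuinely holds here so that Theorems~\ref{aad} and~\ref{n00} apply — this is exactly the content of Theorem~\ref{lanell}, whose hypotheses ($d \le 3$, $\Theta$ as in \eqref{bigtheta} with $\beta > 1+d$, $\eta$ small) match those assumed in the statement of Theorem~\ref{nogood}. Second, one should make sure the orbit through a chosen interior point $x$ actually reaches the boundary in both time directions with $y$ vanishing at both endpoints: this follows because $\nabla u_\theta$ is non-vanishing on $\mathcal O$ (by \eqref{gradlb}, valid for $\eta$ small and $\mathcal O$ separated from the origin), so no integral curve can be trapped, and the travel time is bounded uniformly by $T(\mathcal O, c_\nabla) < \infty$; one also needs $y_\psi \in H^2_0 \subset C(\bar{\mathcal O})$ (Sobolev embedding, valid since $d\le 3$) to make sense of the boundary values pointwise along $\gamma$ and to justify the fundamental-theorem-of-calculus step.

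The main obstacle, conceptually, is the one already flagged in the surrounding text: the range $R(\mathbb I_\theta^*)$ need not be closed, so one cannot simply invoke $\overline{R(\mathbb I_\theta^*)} = \ker(\mathbb I_\theta)^\perp$ together with injectivity of $\mathbb I_\theta$ on $H$ (Theorem~\ref{inject}) to conclude $\psi \in R(\mathbb I_\theta^*)$ — indeed in Example~\ref{ex1} the kernel of $\mathbb I_\theta$ on $L^2(\mathcal O)$ is nontrivial, but even when it is trivial (Example~\ref{ex2}, Proposition~\ref{dynam}) the gap between $R(\mathbb I_\theta^*)$ and its closure is precisely what the transport-equation obstruction exploits. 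The proof therefore deliberately avoids any closed-range argument and works directly with the pointwise transport ODE along orbits; the orbit-integral identity \eqref{ftc} is the crux, and once it is in hand the nonnegativity of $\psi$ furnishes the contradiction in one line.
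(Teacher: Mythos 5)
Your proposal is correct and follows essentially the same route as the paper: identify $R(\mathbb I_\theta^*)$ via Proposition \ref{adjform}, note that membership forces a solution $y\in H^2_0$ of the transport equation (\ref{transport}), reduce along the non-vanishing flow of $\nabla u_\theta$ to the orbit-integral identity (\ref{ftc}), derive the contradiction from nonnegativity of $\psi$, and then invoke Theorems \ref{aad} and \ref{n00}. Your added care about the boundary values of $y$ along orbits and about Condition \ref{ass0} via Theorem \ref{lanell} matches the paper's (largely implicit) treatment of these points.
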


Let us notice that one can further show that (\ref{ftc}) is also a \textit{sufficient} condition for $\psi$ to lie in the range of $\mathbb I_\theta^*$ (provided $\psi$ is smooth and with compact support in $\mathcal O$). As this condition strongly depends on $\theta$ via the vector field $\nabla u_\theta$, it seems difficult to describe any choices of $\psi$ that lie in $\cap_{\theta \in \Theta} R(\mathbb I_\theta^*)$. 

\smallskip

\subsubsection{Example \ref{ex2} continued; range constraint}

\smallskip

We showed in the setting of Example \ref{ex2} that $\mathbb I_\theta$ is injective on all of $L^2(\mathcal O)$, and hence any $\psi \in L^2(\mathcal O)$ lies in \textit{closure} of the range of $\mathbb I_\theta^*$. Nevertheless, there are many relevant $\psi$'s that are not contained in $R(\mathbb I_\theta^*)$. In Example \ref{ex2}, the gradient of $u_\theta$ vanishes and the integral curves $\gamma$ associated to $\nabla u_\theta=(x_1, x_2)$ emanate along straight lines from $(0,0)$ towards boundary points $(z_1, z_2) \in \partial \mathcal O$ where $y((z_1, z_2))=0$. If we parameterise them as $\{(z_1e^{t}, z_2e^{t}): -\infty < t \le 0\}$, then as after (\ref{oder}) we see that if a solution $y \in H^2_0$ to (\ref{transport}) exists then $\psi$ must necessarily satisfy
\begin{equation}
\int_{-\infty}^0 \psi(z_1e^{t}, z_2e^{t}) dt =0 - y(0) =const.~~\forall (z_1, z_2) \in \partial \mathcal O.
\end{equation}
This again cannot happen, for example, for any non-negative non-zero $\psi \in H$ that vanishes along a given curve $\gamma$ (for instance if it is zero in any given quadrant of $\mathcal O$), as this forces $const=0$. Theorems \ref{aad} and \ref{n00} again yield the following for Example \ref{ex2}:

\begin{theorem}\label{nogood2}
Consider the setting of Theorem \ref{nogood} but where now $\mathcal O$ is the unit disk centred at $(0,0)$, and where $0 \le \psi \in C^\infty_0(\mathcal O), \psi \neq 0,$ vanishes along some straight ray from $(0,0)$ to the boundary $\partial \mathcal O$. Then (\ref{nofish}) and (\ref{noway}) hold at $\theta=1$.
\end{theorem}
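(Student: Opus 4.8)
The plan is to mirror exactly the argument given for Example~\ref{ex1} (Theorem~\ref{nogood}), adapting only the description of the integral curves of $\nabla u_\theta$ to the present geometry where $\nabla u_1 = (x_1, x_2)$ vanishes at the origin. First I would recall that by Theorem~\ref{lanell} Condition~\ref{ass0} holds in this setting (the unit disk with $f=2$, $g_{|\partial\mathcal O}=0$ is covered by the same elliptic estimates), that $\mathbb I_\theta^* \mathbb I_\theta$ is compact and self-adjoint on $L^2(\mathcal O)$ (as noted after Proposition~\ref{adjform}), and that the injectivity condition (\ref{lowbd}) was verified for all $\theta \in \Theta$ in Example~\ref{ex2}; hence Theorems~\ref{aad} and~\ref{n00} are applicable, and it suffices to show that the prescribed $\psi$ does \emph{not} lie in $R(\mathbb I_\theta^*)$ for $\theta=1$.

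Next I would invoke the range description (\ref{range}): if $\psi \in R(\mathbb I_1^*)$ then the transport equation (\ref{transport}), namely $\nabla u_1 \cdot \nabla y = \psi$ on $\mathcal O$ with $y_{|\partial\mathcal O}=0$, has a solution $y = y_\psi \in H^2_0 \subset C^0(\bar{\mathcal O})$ (Sobolev embedding in dimension $2$, using $H^2 \subset C^0$). I would then parameterise the integral curves of $\nabla u_1 = (x_1,x_2)$: through each boundary point $(z_1,z_2) \in \partial\mathcal O$ the backward flow is the ray $t \mapsto (z_1 e^{t}, z_2 e^{t})$, $-\infty < t \le 0$, which sweeps out a radial segment from $(0,0)$ to $(z_1,z_2)$. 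Along such a curve the chain rule turns (\ref{transport}) into the ODE $\frac{d}{dt}\, y(z_1 e^t, z_2 e^t) = \psi(z_1 e^t, z_2 e^t)$; integrating from $-\infty$ to $0$ and using $y_{|\partial\mathcal O}=0$ together with continuity of $y$ at the origin gives
\begin{equation}\label{eq:radial}
\int_{-\infty}^0 \psi(z_1 e^{t}, z_2 e^{t})\, dt = y(z_1,z_2) - y(0,0) = -\,y(0,0),
\end{equation}
a constant independent of $(z_1,z_2) \in \partial\mathcal O$ (one should note the integral converges since $\psi$ has compact support in $\mathcal O$, so the integrand vanishes for $t$ near $-\infty$).

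Finally I would extract the contradiction exactly as in Example~\ref{ex1}: if $\psi \ge 0$ is not identically zero and vanishes along some straight ray $\gamma_0$ from $(0,0)$ to a boundary point $(z_1^0, z_2^0)$, then evaluating (\ref{eq:radial}) at $(z_1^0,z_2^0)$ forces the constant $-y(0,0)$ to equal $0$; but evaluating it at a boundary point $(z_1,z_2)$ whose radial segment passes through a point where $\psi \ge c > 0$ on a small ball gives a strictly positive value, since the integrand is nonnegative everywhere and positive on an interval of $t$'s. This contradiction shows $\psi \notin R(\mathbb I_1^*)$, so by Theorem~\ref{aad} the efficient Fisher information $i_{1,H,\psi}$ vanishes, i.e.\ (\ref{nofish}) holds at $\theta=1$, and then Theorem~\ref{n00} yields (\ref{noway}) at $\theta=1$. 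The only genuinely new point relative to Theorem~\ref{nogood} is handling the singular behaviour of the flow at the origin: one must check that the improper integral in (\ref{eq:radial}) is well-defined (guaranteed by $\supp\psi \Subset \mathcal O$) and that the boundary/center values of the continuous function $y$ may legitimately be substituted — this is the main, though mild, obstacle, and it is dispatched by the compact-support hypothesis on $\psi$ and the $H^2 \hookrightarrow C^0$ embedding in $d=2$.
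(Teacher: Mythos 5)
Your proposal is correct and follows essentially the same route as the paper: use the range description (\ref{range}) to reduce membership of $\psi$ in $R(\mathbb I_1^*)$ to solvability of the transport equation (\ref{transport}), parameterise the integral curves of $\nabla u_1=(x_1,x_2)$ as $(z_1e^t,z_2e^t)$, $t\le 0$, integrate the resulting ODE to get $\int_{-\infty}^0\psi(z_1e^t,z_2e^t)\,dt=-y(0,0)$ for all boundary points, and note that the ray on which $\psi$ vanishes forces this constant to be $0$ while nonnegativity and $\psi\neq 0$ make it strictly positive along another ray, a contradiction; Theorems \ref{aad} and \ref{n00} then give (\ref{nofish}) and (\ref{noway}). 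Your added remarks on convergence of the improper integral (compact support of $\psi$) and continuity of $y$ at the origin via $H^2\hookrightarrow C^0$ in $d=2$ only make explicit details the paper leaves implicit.
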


Arguing as after Proposition \ref{dynam}, the result can be extended to any $\theta \in \Theta$ by an application of the Morse lemma.

\subsection{Concluding remarks}

\begin{remark}\label{intrcrit}{Interior saddle points of $u_\theta$.} \normalfont To complement Examples \ref{ex1}, \ref{ex2}, suppose we take $\theta=1, f=0$ in (\ref{PDE}) so that $u=u_1=x_1^2-x_2^2$ if $g = u_{\partial \mathcal O}$ (and $\mathcal O$ is the unit disk, say). Then $\nabla u = 2(x_1,-x_2)$ and the critical point is a saddle point. In this case we can find integral curves $\gamma_x$ running through $x$ away from $(0,0)$ between boundary points in finite time. Then is $\psi$ is nonnegative and supported near $x$ it cannot integrate to zero along $\gamma_x$. An analogue of Theorem \ref{nogood} then follows for this constellation of parameters in (\ref{PDE}), too. Note that in this example, the kernel of $\mathbb I_\theta$ contains at least all constants.
\end{remark}

\begin{remark}{Local curvature of $\G$. } \normalfont
The quantitative nature of (\ref{gradstab}) in Theorem \ref{inject} is compatible with `gradient stability conditions' employed in \cite{NW20, BN21} to establish polynomial time posterior computation time bounds for gradient based Langevin MCMC schemes. Specifically, arguing as in Lemma 4.7 in \cite{NW20}, for a neighbourhood $\mathcal B$ of $\theta_0$ one can deduce local average `curvature' $$\inf_{\theta \in \mathcal B}\lambda_{min} E_{\theta_0}[-\nabla^2 \ell (\theta)] \ge c_2 D^{-4/d},$$ of the average-log-likelihood function $\ell$ when the model $\Theta$ is discretised in the eigen-basis $E_D \equiv (e_n: n \le D) \subset H$ arising from the Dirichlet Laplacian. In this sense (using also the results from \cite{GN20}) one can expect a Bayesian inference method based on data (\ref{model}) and Gaussian process priors to be consistent and computable even in high-dimensional settings. This shows that such local curvature results are not sufficient to establish (and hence distinct from) Gaussian `Bernstein-von Mises-type' approximations.
\end{remark}

\begin{remark}\label{normald}{Boundary constraints on $\theta$.}  \normalfont
As the main flavour of our results is `negative', the assumption of knowledge of the boundary values of $\theta$ in (\ref{bigtheta}) strengthens our conclusions -- it is also natural as the regression function $u=g$ is already assumed to be known at $\partial \mathcal O$. In the definition of the parameter space $\Theta$ we could further have assumed that all outward normal derivatives up to order $\beta-1$ of $\theta$ vanish at $\partial \mathcal O$. This would be in line with the parameter spaces from \cite{GN20, NVW18}. All results in this section remain valid  because our choice of tangent space $H$ in (\ref{tangentspace}) is compatible with this more constrained parameter space. \end{remark}

\begin{remark}\label{why}{Ellipticity.} \normalfont
The Bernstein-von Mises theorems from \cite{N17, MNP20, MNP17} exploit \textit{ellipticity} of the information operator $\mathbb I_\theta^* \mathbb I_\theta$ in their settings, allowing one to solve for $y$ in the equation $\mathbb I_\theta^* \mathbb I_\theta y =\psi$ so that $R(\mathbb I_\theta^*)$ contains at least all smooth compactly supported $\psi$ (and this is so for \textit{any} parameter $\theta \in \Theta$). In contrast, in the present inverse problem arising from (\ref{PDE}), the information operator does not have this property and solutions $y$ to the critical equation $\mathbb I^*_\theta y = \psi$ exist only under stringent geometric conditions on $\psi$. Moreover, these conditions exhibit a delicate dependence on $\theta$, further constraining the set $\cap_{\theta \in \Theta} R(\mathbb I_\theta^*)$ relevant for purposes of statistical inference. 
\end{remark}


\section{Appendix}

For convenience of the reader we include here a few more proofs of some results of this article.

\subsection{Proofs of Theorem \ref{inject} and Proposition \ref{oldboy}}\label{injprf}

Define the operator $$T_\theta(h) = \nabla \cdot (h \nabla u_\theta),~~ h \in H,$$  so that (\ref{redscop}) becomes $\mathbb I_\theta = V_\theta \circ T_\theta$. The map $u \mapsto (\mathcal L_\theta u, u_{|\partial \mathcal O})$ is a topological isomorphism between $H^{2}(\mathcal O)$ and $L^2(\mathcal O) \times H^{3/2}(\partial \mathcal O)$, (\cite{LM72}, Theorem II.5.4), and hence with $u=V_\theta[w]$ we deduce $\|V_\theta [w]\|_{H^2} \gtrsim \|w\|_{L^2}$ for all $w \in C^\infty(\mathcal O)$. As a consequence, using also Lemma \ref{coerc},
\begin{equation*}
\|\mathbb I_\theta[h]\|_{H^2} \gtrsim \|T_\theta(h)\|_{L^2} \gtrsim \|h\|_{L^2},~~~h \in H
\end{equation*}
which proves the inequality in Theorem \ref{inject}. Next, as $\mathbb I_\theta$ is linear we see that whenever $\mathbb I_\theta[h_1]= \mathbb I_\theta[h_2]$ for $h_1, h_2 \in H$ we have $\mathbb I_{\theta}[h_1-h_2]=0$ and so by the preceding inequality $h=h_1-h_2=0$ in $L^2$, too.  Likewise if $h_1, h_2 \in H$ are such that $\mathbb I_\theta^* \mathbb I_\theta h_1 = \mathbb I_\theta^* \mathbb I_\theta h_2$, then $0 = \langle \mathbb I_\theta^* \mathbb I_\theta (h_1 - h_2), h_1 -h_2 \rangle_{L^2_\lambda} = \|\mathbb I_\theta (h_1-h_2)\|_{L_\lambda^2}^2$
so $\mathbb I_\theta h_1 = \mathbb I_\theta h_2$ and thus by what precedes $h_1=h_2$. 

\smallskip

\begin{lemma}\label{coerc}
We have $\|T_\theta(h)\|_{L^2} =\| \nabla \cdot (h \nabla u_\theta)\|_{L^2} \ge c \|h\|_{L^2}$ for all $h \in H$ and some constant $c=c(\mu, B, c_0)>0$, where $B \ge \|u_\theta\|_\infty$.
\end{lemma}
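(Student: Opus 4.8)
The plan is to establish the pointwise identity $T_\theta(h) = \nabla h \cdot \nabla u_\theta + h\, \Delta u_\theta$ and then extract an $L^2$ lower bound by a clever algebraic manipulation that uses the injectivity condition (\ref{lowbd}) applied to $\theta$. The key trick I would use is the same one that underlies Proposition \ref{oldboy}: multiply $T_\theta(h)$ by a well-chosen weight of the form $e^{-\mu u_\theta}$ (or similar) so that the resulting expression becomes a perfect divergence plus a term controlled below by (\ref{lowbd}). Concretely, observe that
\[
e^{-\mu u_\theta}\, T_\theta(h) = e^{-\mu u_\theta}\big(\nabla h \cdot \nabla u_\theta + h\,\Delta u_\theta\big) = \nabla \cdot \big(h\, e^{-\mu u_\theta}\nabla u_\theta\big) + \mu\, h\, e^{-\mu u_\theta} |\nabla u_\theta|^2 + h\, e^{-\mu u_\theta}\,\Delta u_\theta,
\]
so that, writing $w = e^{-\mu u_\theta}$,
\[
w\, T_\theta(h) = \nabla \cdot (h w \nabla u_\theta) + h\, w\,\big(\Delta u_\theta + \mu |\nabla u_\theta|^2\big).
\]
The first term on the right is a divergence, hence integrates to zero against $h$ over $\mathcal O$ once one uses $h \in C^\infty_0(\mathcal O)$ (or one integrates $T_\theta(h)$ against $h w$ and notes the divergence term produces a bulk term one must absorb).

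The key step is then the $L^2$-pairing: I would pair $T_\theta(h)$ with $h w$ and compute
\[
\langle T_\theta(h),\, h w\rangle_{L^2} = \int_\mathcal O \nabla \cdot (h w \nabla u_\theta)\, h\, d\lambda + \int_\mathcal O h^2 w \big(\Delta u_\theta + \mu |\nabla u_\theta|^2\big)\, d\lambda.
\]
Integrating the first integral by parts (boundary terms vanish since $h$ has compact support), it becomes $-\int_\mathcal O h w \nabla u_\theta \cdot \nabla h\, d\lambda = -\tfrac12 \int_\mathcal O w \nabla u_\theta \cdot \nabla(h^2)\, d\lambda = \tfrac12 \int_\mathcal O h^2\, \nabla\cdot(w \nabla u_\theta)\, d\lambda$. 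A short computation gives $\nabla \cdot (w \nabla u_\theta) = w(\Delta u_\theta - \mu |\nabla u_\theta|^2)$, so the two contributions combine to
\[
\langle T_\theta(h), h w\rangle_{L^2} = \int_\mathcal O h^2 w\Big(\tfrac12 \Delta u_\theta - \tfrac{\mu}{2}|\nabla u_\theta|^2 + \Delta u_\theta + \mu |\nabla u_\theta|^2\Big) d\lambda = \tfrac12\int_\mathcal O h^2 w\big(3\Delta u_\theta + \mu |\nabla u_\theta|^2\big) d\lambda.
\]
Replacing $\mu$ by $3\mu'$ (so the quantity in brackets is $3(\Delta u_\theta + \mu'|\nabla u_\theta|^2)$) and invoking (\ref{lowbd}) with constant $c_0$ yields $\langle T_\theta(h), h w\rangle_{L^2} \ge \tfrac{3 c_0}{2}\inf_{\mathcal O} w \cdot \|h\|_{L^2}^2$. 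Since $w = e^{-3\mu' u_\theta} \ge e^{-3\mu' B}$ with $B \ge \|u_\theta\|_\infty$, and since by Cauchy--Schwarz $\langle T_\theta(h), h w\rangle_{L^2} \le \|T_\theta(h)\|_{L^2}\, \|h w\|_{L^2} \le e^{3\mu' B}\|T_\theta(h)\|_{L^2}\|h\|_{L^2}$, we obtain $\|T_\theta(h)\|_{L^2} \ge c\, \|h\|_{L^2}$ with $c = \tfrac{3 c_0}{2} e^{-6\mu' B}$ depending only on $\mu, B, c_0$, as claimed.

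The main obstacle is purely bookkeeping: one must choose the exponential weight and the rescaling of $\mu$ so that the cross terms from the integration by parts combine constructively (rather than destructively) with the zeroth-order term, and so that exactly the combination $\Delta u_\theta + \mu|\nabla u_\theta|^2$ appearing in (\ref{lowbd}) is produced up to a positive multiple. The sign in the exponent and the factor on $\mu$ must be tracked carefully; beyond that, every step is a routine integration by parts justified by $h \in C^\infty_0(\mathcal O)$ and smoothness of $u_\theta$.
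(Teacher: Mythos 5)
Your strategy is essentially the paper's: pair $T_\theta(h)$ with an exponentially weighted copy of $h$, integrate by parts using $h \in C^\infty_0(\mathcal O)$, and invoke (\ref{lowbd}) together with Cauchy--Schwarz (the paper takes $v=e^{-\mu u_\theta}h$ and tests against $v^2$, i.e.\ pairs $T_\theta(h)$ with $h e^{-2\mu u_\theta}$). However, your opening pointwise identity is wrong. With $w=e^{-\mu u_\theta}$ one has $\nabla\cdot(h w\nabla u_\theta)=w\,\nabla h\cdot\nabla u_\theta+h w\,\Delta u_\theta-\mu\,h w\,|\nabla u_\theta|^2$, hence
\[
w\,T_\theta(h)=\nabla\cdot(h w\nabla u_\theta)+\mu\,h w\,|\nabla u_\theta|^2,
\]
i.e.\ without the extra term $h w\,\Delta u_\theta$ that you include. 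This slip propagates: your displayed formula $\langle T_\theta(h),h w\rangle_{L^2}=\tfrac12\int h^2 w\,(3\Delta u_\theta+\mu|\nabla u_\theta|^2)\,d\lambda$ is false. Your integration by parts of the divergence term is fine, and redoing the computation with the corrected identity gives
\[
\langle T_\theta(h),h w\rangle_{L^2}=\tfrac12\int_\mathcal O h^2\,w\,\big(\Delta u_\theta+\mu\,|\nabla u_\theta|^2\big)\,d\lambda .
\]

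The error is harmless for the conclusion: the correct bracket is exactly the quantity appearing in (\ref{lowbd}), so the rescaling $\mu\mapsto 3\mu'$ is unnecessary, and (\ref{lowbd}) yields $\langle T_\theta(h),h w\rangle_{L^2}\ge \tfrac{c_0}{2}\,e^{-\mu B}\|h\|_{L^2}^2$, which combined with your Cauchy--Schwarz step $\langle T_\theta(h),h w\rangle_{L^2}\le e^{\mu B}\|T_\theta(h)\|_{L^2}\|h\|_{L^2}$ gives the lemma with $c=\tfrac{c_0}{2}e^{-2\mu B}$, depending only on $\mu, B, c_0$ as claimed. So after fixing this piece of algebra your argument is a correct, slightly streamlined version of the paper's proof (which arrives at the same weighted identity via Gauss--Green with $v=e^{-\mu u_\theta}h$); as written, though, the central identity and the resulting constant-chasing are incorrect.
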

\begin{proof}
Applying the Gauss-Green theorem to any $v \in C^1(\mathcal O)$ vanishing at $\partial \mathcal O$  gives
		\begin{align*}
		\langle \Delta u_{\theta}, v^2 \rangle_{L^2} + \frac{1}{2} \langle \nabla u_{\theta}, \nabla (v^2) \rangle_{L^2} = \frac{1}{2}\langle \Delta u_{\theta}, v^2 \rangle_{L^2}.
		\end{align*}
		For $v=e^{-\mu u_{\theta}}h, h \in H,$ with $\mu>0$ to be chosen we thus have
		\begin{equation*}
		\frac{1}{2} \int_{\mathcal O}  \nabla (v^2) \cdot \nabla u_{\theta} =- \int_{\mathcal O} \mu \|\nabla u_{\theta}\|^2 v^2 + \int_\mathcal O v e^{-\mu u_{\theta}}\nabla h \cdot \nabla u_{\theta},
		\end{equation*}
		so that by the Cauchy-Schwarz inequality
		\begin{align} \label{keylb}
		 \left|\int_{\mathcal O}\Big(\frac{1}{2}\Delta u_{\theta}+\mu \|\nabla u_{\theta}\|^2\Big)v^2 \right| &= \left|\langle (\Delta u_{\theta} + \mu \|\nabla u_{\theta}\|^2), v^2 \rangle_{L^2} + \frac{1}{2} \langle \nabla u_{\theta}, \nabla (v^2)\rangle_{L^2}\right|  \\
		&= \left|\langle h \Delta u_{\theta} + \nabla h \cdot \nabla u_{\theta}, h e^{-2\mu u_{\theta}} \rangle_{L^2} \right| \le \mu \|\nabla \cdot (h \nabla u_{\theta})\|_{L^2}  \|h\|_{L^2} \notag
		\end{align}
		for $\bar \mu=\exp(2\mu \|u_{\theta}\|_{\infty})$.  We next lower bound the multipliers of $v^2$ in l.h.s.~of (\ref{keylb}): By (\ref{lowbd})
		$$ \left|\int_{\mathcal O}\Big(\frac{1}{2}\Delta u_{\theta}+\mu \|\nabla u_{\theta}\|^2\Big)v^2 \right|   \ge c_0 \int_{\mathcal O} v^2$$ and combining this with (\ref{keylb}) we deduce
		\begin{equation*}
		\|\nabla \cdot (h \nabla u_{\theta})\|_{L^2}\|h\|_{L^2}\geq c'\|v\|^2_{L^2(\mathcal O)}\gtrsim \|h\|^2_{L^2},~h \in H,
		\end{equation*}
		which is the desired estimate.
\end{proof}
The last lemma also immediately implies Proposition \ref{oldboy}: Let us write $h=\theta_1-\theta_2$ which defines an element of $H$. Then by (\ref{PDE}) we have
$\nabla \cdot (h \nabla u_{\theta_1}) = \nabla \cdot (\theta_2 \nabla (u_{\theta_2}-u_{\theta_1}))$
and hence 
$\|\nabla \cdot (h \nabla u_{\theta_1})\|_{L^2} \lesssim  \|u_{\theta_2}-u_{\theta_1}\|_{H^2}.$ By Lemma \ref{coerc} the l.h.s.~is lower bounded by a constant multiple of $\|h\|_{L^2}= \|\theta_1-\theta_2\|_{L^2}$, so that the result follows.

\subsection{Proof of Theorem \ref{aad} for $\mathbb I_\theta^* \mathbb I_\theta$ compact} \label{spec}

Let us assume $\bar H = \mathbb H$ without loss of generality, write $I \equiv\mathbb I_\theta, L^2=L^2_\lambda(\mathcal X)$ in this proof, and let $\text{ker}(I^*I)=\{h \in \mathbb H: I^*Ih=0\}$. If $I^*I$ is a compact operator on $\mathbb H$ then by the spectral theorem for self-adjoint operators, there exists an orthonormal system of $\mathbb H$ of eigenvectors $\{e_k: k \in \mathbb N\}$ spanning $\mathbb H \ominus \text{ker}(I^*I)$ corresponding to eigenvalues $\lambda_k > 0$ so that $$I^*Ie_k = \lambda_k e_k,~~\text{and }I^*I h = \sum_k \lambda_k \langle h, e_k \rangle_\mathbb H e_k,~~ h \in \mathbb H.$$ We can then define the usual square root operator $(I^*I)^{1/2}$ by
\begin{equation}\label{root}
(I^*I)^{1/2} h = \sum_k \lambda^{1/2}_k \langle h, e_k \rangle_\mathbb H e_k,~~ h \in \mathbb H.
\end{equation}
If we denote by $P_0$ the $\mathbb H$-projection onto $\text{ker}(I^*I)$, then the range of $(I^*I)^{1/2}$ equals
\begin{equation}\label{rootrange}
R((I^*I)^{1/2}) = \Big\{g \in \mathbb H: P_0(g)=0, \sum_{k} \lambda_k^{-1} \langle e_k, g \rangle_{\mathbb H}^2 <\infty \Big\}.
\end{equation}
Indeed using standard Hilbert space arguments, a) since $P_0(e_k)=0$ for all $k$, for any $h \in \mathbb H$ the element $g=(I^*I)^{1/2} h$ belongs to the right hand side in the last display, and conversely b) if $g$ satisfies $P_0(g)=0$ and $\sum_{k} \lambda_k^{-1} \langle e_k, g \rangle_\mathbb H^2<\infty$ then $h=\sum_k \lambda_k^{-1/2} \langle e_k, g\rangle e_k$ belongs to $\mathbb H$ and $(I^*I)^{1/2} h = g$. 

Next, Lemma A.3 in \cite{vdV91} implies that $R(I^*)=R((I^*I)^{1/2})$. Now suppose $\psi \in \mathbb H$ is such that $\psi \notin R(I^*)$ and hence $\psi \notin R((I^*I)^{1/2})$. Then from (\ref{rootrange}), either $P_0(\psi)\neq 0$ or $\sum_{k} \lambda_k^{-1} \langle e_k, \psi \rangle_{\mathbb H}^2 =\infty$ (or both). In the first case, let $\bar h = P_0(\psi)$ so
$$\|I\bar h\|_{L^2}=\|I(P_0(\psi))\|_{L^2}= \langle I^*I(P_0(\psi)), P_0(\psi) \rangle_\mathbb H=0$$  but $\langle \psi, \bar h\rangle_{\mathbb H} = \|P_0 \psi\|_{\mathbb H}^2= \delta$ for some $\delta>0$. Since $H$ is dense in $\mathbb H$, for any $\epsilon, 0<\epsilon<\min(\delta/(2\|\psi\|_\mathbb H), \delta^2/4),$ we can find $h \in H$ such that $\|h-\bar h\|_{\mathbb H}<\epsilon$ and by continuity also $\|I(h-\bar h)\|_{L^2}<\epsilon$. Then 
$$\sqrt{i_{\theta, h, \psi}} = \frac{\|Ih\|_{L^2}}{|\langle \psi, h \rangle_{\mathbb H}|} \le 2\frac{\epsilon}{\delta} \le \sqrt \epsilon.$$ Using also (\ref{lanip}) we conclude that $i_{\theta, H, \psi}<\epsilon$ in (\ref{efffish}), so that the result follows since $\epsilon$ was arbitrary. In the second case we have $\sum_{k} \lambda_k^{-1} \langle e_k, \psi \rangle_{\mathbb H}^2 =\infty$ and define $$\psi_N = \sum_{k \le N} \lambda_{k}^{-1} e_k \langle e_k, \psi \rangle_\mathbb H,~~N \in \mathbb N,$$ which defines an element of $\mathbb H$. By density we can choose $h_N \in H$ such that $\|h_N - \psi_N\|_{\mathbb H} <1/\|\psi\|_{\mathbb H}$ as well as $\|I(h_N-\psi_N)\|_{L^2}<1$, for every $N$ fixed. Next observe that
\begin{equation*}
\langle \psi, \psi_N \rangle_{\mathbb H} = \sum_{k \le N} \lambda_k^{-1} \langle e_k, \psi \rangle_{\mathbb H}^2 \equiv M_N
\end{equation*}
\begin{equation*}
\|I(\psi_N)\|^2_{L^2} = \langle I^*I(\psi_N), \psi_N \rangle_{\mathbb H} = \sum_{k \le N} \lambda_k^{-1} \langle e_k, \psi \rangle_{\mathbb H}^2 =M_N
\end{equation*}
and that $M_N \to \infty$ as $N \to \infty$. Then by our choice of $h_N \in H$ and if $M_N \ge 2$ we have by the triangle inequality,
$$|\langle \psi, h_N \rangle_{\mathbb H}| \ge |\langle \psi, \psi_N \rangle_{\mathbb H}| -  |\langle \psi, \psi_N -h_N\rangle_{\mathbb H}| \ge M_N - 1 \ge M_N/2,$$
$$\|I(h_N)\|_{L^2} \le \|I(\psi_N)\|_{L^2} + \|I(h_N - \psi_N)\|_{L^2} \le \sqrt M_N+1 \le 2 \sqrt M_N.$$ From this and (\ref{lanip}) we conclude that the inverse of (\ref{efffish}) satisfies
$$i^{-1}_{\theta, H, \psi} \ge \frac{\langle \psi, h_N \rangle_{\mathbb H}^2}{\|Ih_N\|_{L^2}^2} \ge \frac{1}{16} \frac{M_N^2}{M_N} \ge M_N/16.$$ As $N$ was arbitrary and $M_N \to_{N\to \infty} \infty$ we must have $i_{\theta, H, \psi}=0$, as desired.

\medskip

\textbf{Acknowledgement.} We are grateful to Jan Bohr and Lauri Oksanen for helpful remarks and discussions.

\bibliographystyle{plain}
\bibliography{ref.bib}

\end{document}